\documentclass[11pt,a4paper]{article}
\usepackage[T1]{fontenc}
\usepackage[utf8]{inputenc}
\usepackage{lmodern,microtype}
\usepackage[margin=27mm,headheight=14pt]{geometry}
\usepackage{amsmath,amssymb,amsthm,mathtools}
\usepackage{enumitem,booktabs,tabularx}
\usepackage[numbers,sort&compress]{natbib}
\usepackage{xcolor}
\definecolor{linkblue}{RGB}{27,70,105}
\usepackage[colorlinks=true,allcolors=linkblue,pdfusetitle]{hyperref}
\usepackage[capitalize,nameinlink,noabbrev]{cleveref}
\usepackage{fancyhdr}
\numberwithin{equation}{section}
\newtheorem{theorem}{Theorem}[section]
\newtheorem{proposition}[theorem]{Proposition}
\newtheorem{lemma}[theorem]{Lemma}
\newtheorem{corollary}[theorem]{Corollary}
\newtheorem{classical}[theorem]{Classical input}
\theoremstyle{definition}

\theoremstyle{remark}
\newtheorem{remark}[theorem]{Remark}
\newcommand{\R}{\mathbb R}

\newcommand{\E}{\mathbb E}
\newcommand{\Prob}{\mathbb P}
\newcommand{\one}{\mathbf 1}
\newcommand{\supp}{\operatorname{supp}}

\newcommand{\W}{\mathcal W}
\newcommand{\src}{\mathsf P}
\newcommand{\norm}[1]{\left\lVert#1\right\rVert}
\newcommand{\ip}[2]{\langle#1,#2\rangle}
\newcommand{\KL}[2]{D\!\left(#1\,\middle\|\,#2\right)}

\title{\textbf{Sudakov Minoration for\\Unconditional Log-Concave Vectors}\\[5pt]
\large Common witnesses, entropy softening, and clipped transport}
\author{{Witold Bednorz, Rafal Martynek and Rafal Meller}
\footnote{{\bf Subject classification:} 60G15, 60G17}
\footnote{{\bf Keywords and phrases:} Canonical Processes,  Invariant Method}
\footnote{Research partially supported by  Grant UMO-2022/47/B/ST1/02114}
\footnote{Institute of Mathematics, University of Warsaw, Banacha 2, 02-097 Warszawa, Poland}}
\date{}
\hypersetup{
 pdftitle={Sudakov Minoration for Unconditional Log-Concave Vectors},
 pdfauthor={Witold Bednorz},
 pdfsubject={Research draft with historical background and expanded exposition},
 pdfkeywords={Sudakov minoration, log-concave measure, unconditionality, entropy, transport}
}
\begin{document}
\maketitle
\begin{center}
\small\textbf{AI was used in this research.}
\end{center}
\begin{abstract}
Sudakov minoration asks whether a large family of separated random
linear forms must have a large expected maximum. We present a proof
candidate for this principle for all unconditional log-concave vectors.
The argument starts with a classical reduction to sparse coordinate
supports. It then selects one feasible threshold witness for each label,
removes a common coordinate core, and controls the remaining overlaps.
The main analytic step is an exponential-moment estimate for a softened
witness payoff. We obtain it by combining a one-dimensional clipping
inequality with triangular transport, using log-concavity of the source
and sign symmetry of the target. A Bernoulli comparison restores the
signs at the end. The exposition includes historical context, a guide to
the proof, all intermediate arguments, and explicit choices of constants.
Two consequences concern covering numbers in the moment metric and
concentration of bounded functions of the magnitudes.
\end{abstract}
\noindent\textbf{Keywords:} Sudakov minoration; unconditional log-concave
measure; joint-tail witness; relative entropy; triangular transport.
\newpage
\tableofcontents
\newpage

\section{Introduction}

\subsection{The question and the main statement}

The principle studied here has a simple interpretation: if a family
contains exponentially many random linear forms, and every two of them
are well separated at the corresponding moment order, then their
expected maximum should be large. The difficulty is that separation is
a statement about each pair, whereas a maximum involves the whole
family at once.

A random vector $X$ in $\R^d$ is \emph{unconditional} if
$(\varepsilon_iX_i)_{i=1}^d$ has the same distribution as $X$ for every
deterministic choice of signs $\varepsilon_i\in\{-1,1\}$. This is also
called $1$-unconditionality. It allows arbitrary dependence between
the magnitudes $|X_i|$. We use log-concavity in the measure sense,
including measures supported on a proper linear subspace. For a finite
set $T\subset\R^d$, write
\[
 \W_X(T)=\E\max_{t\in T}\ip{t}{X},
 \qquad d_P(s,t)=\norm{\ip{t-s}{X}}_P .
\]
Thus $\W_X(T)$ is the expected supremum and $d_P$ is the moment
distance between labels. All constants called universal are
independent of $d,P,X,T$.

\begin{theorem}[Main statement]\label{thm:main}
There is a universal constant $c>0$ with the following property.
If $X$ is unconditional and log-concave, $P\ge1$, $A>0$, and
$T\subset\R^d$ is finite with
\begin{equation}\label{eq:packing}
 |T|\ge e^P,\qquad d_P(s,t)\ge A\quad(s\ne t),
\end{equation}
then
\begin{equation}\label{eq:main-conclusion}
 \W_X(T)\ge cA.
\end{equation}
\end{theorem}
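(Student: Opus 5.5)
The plan follows the route announced in the abstract, and its backbone is the classical Latała strategy. \textbf{Step 1} reduces to sparse supports. Since $X$ is unconditional, $\ip{t}{X}$ has the law of $\sum_i t_i\varepsilon_i|X_i|$, with $(\varepsilon_i)$ independent Rademacher signs independent of $(|X_i|)$. By the standard reduction (Latała's argument via the Bernoulli/Talagrand-type decomposition and Gaussian-mixture comparisons for log-concave tails), we may pass to a subfamily $T'\subset T$ with $|T'|\ge e^{P/2}$. For this subfamily the differences $t-s$ are essentially carried by at most $CP$ coordinates in the sense of the moment metric, i.e.\ $d_P(s,t)\approx\norm{\ip{(t-s)\one_{I_{st}}}{X}}_P$ with $|I_{st}|\lesssim P$. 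Where the tail part is not sparse, the Gaussian-like bulk already gives $\W_X(T)\gtrsim A$ through the known minoration for product-type and Gaussian mixtures. Log-concavity guarantees $\norm{\cdot}_{2P}\le C\norm{\cdot}_P$, and this is what makes all such comparisons cost only constants.

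\textbf{Step 2} builds the witnesses. For each label $t$ we use the joint-tail characterization of moments of log-concave vectors: $\norm{\ip{u}{X}}_P\simeq\sup\{\sum_i|u_i|a_i:\ \Prob(|X_i|\ge a_i\ \forall i\in I)\ge e^{-P}\}$. This lets us choose a feasible threshold vector $a^{t}$ (a ``witness'') realizing the $P$-th moment of the relevant differences up to constants. By pigeonholing over the at most $e^{O(P)}$ possible coarse configurations, we extract a common core $J$ of coordinates and thresholds shared by many labels. We then remove $J$; the contribution of $J$ is handled by conditioning on the event $\{|X_i|\ge a_i,\ i\in J\}$. The labels that remain have pairwise small overlaps of their witness supports.

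\textbf{Step 3} is the analytic heart and the main obstacle. We need an exponential-moment estimate of the form $\E\exp(\lambda\,\Phi_t(|X|))\le e^{CP}$ uniformly in $t$, where $\Phi_t$ is a softened (clipped) payoff $\sum_i|t_i|\min(|X_i|,a^t_i)$ restricted to the residual witness coordinates. Combined with the Donsker--Varadhan/relative entropy inequality, this estimate shows that under the tilted measure $Q$ charging the joint tail event (where $\KL{Q}{\src}\le P$) the payoff is large for a positive fraction of labels simultaneously.

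To prove the estimate I would proceed as follows. First, apply a one-dimensional clipping inequality to each coordinate. Second, move the law of $|X|$ to a product reference (symmetric exponential) by the Knothe--Rosenblatt triangular map, which is monotone and, by log-concavity of the source, Lipschitz in the right coordinatewise sense. Sign symmetry of the target lets us control the cross terms. I expect the bookkeeping of the overlaps and of the clipping levels to be delicate; this is where constants must be chosen explicitly.

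\textbf{Step 4} restores the signs. Conditional on $|X|$ lying in a set of $Q$-probability bounded below where many payoffs are $\gtrsim A$ and pairwise separated, we apply Bernoulli Sudakov minoration in Talagrand's form, or a direct comparison with a $\ell_1$/$\ell_\infty$ split. This gives $\E_\varepsilon\max_t\sum_i t_i\varepsilon_i|X_i|\gtrsim A$ on that event. Finally, unconditionality together with $\E\max\ge0$ for symmetric families (after translating $T$ by a fixed point) lets us integrate and conclude $\W_X(T)\ge cA$.
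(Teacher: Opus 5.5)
There is a genuine gap, concentrated in Steps 3--4, and Step 2 has three further problems.

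\textbf{The gap in Steps 3--4.} A tilt $Q$ with $\KL{Q}{\src}\le P$, for instance $\src(\cdot\mid Y\ge a^t)$, can make an event $E$ likely while $\src(E)$ is as small as $e^{-CP}$. Integrating the Bernoulli bound over such an $E$ in Step 4 then gives only $e^{-CP}A$. Moreover, under $\src(\cdot\mid Y\ge a^t)$ only the payoff of $t$ itself is forced to be large, not that of a positive fraction of labels. This is exactly the difference between $\log\E Z$ and $\E\log Z$ for the partition function $Z(y)=\E_\mu e^{C_t(y)/\eta}$. A per-label bound $\E e^{\lambda\Phi_t}\le e^{CP}$ cannot close it; that bound is automatic anyway, since $\Phi_t$ is bounded. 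What you need is many labels with large gain \emph{simultaneously}, on an event of $\src$-probability bounded below.

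The paper obtains this by concentrating a function of all labels at once. Feasibility gives only the annealed bound \eqref{eq:annealed} for the free energy $H_{\eta'}(Y)=\eta'\log\E_\mu e^{C_t(Y)/\eta'}$. This becomes $\E H_\eta(Y)\ge b_0B/2$ because of a softened version $\widehat H_\eta\ge H_{\eta+b}$ from \Cref{lem:softening}, which charges the lost gain to $\KL{\pi}{\mu}$ using the spread of the label law. The softened energy is Lipschitz in $z_i=1-e^{-ay_i}$ with $L^2\lesssim B^2m\,p^{-2}e^{-\tau/2}$. Hence at temperature $\eta\asymp B/p$, and using $m\le\epsilon_*p$, the fluctuation loss is a small multiple of $B$.

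Your transport step would not supply even this. No dimension-free Lipschitz property is known for the Knothe map between a general unconditional log-concave law and a product exponential law. The paper instead transports $\src$ onto its own exponential tilt $\src_t$, which is unconditional, so the conditional maps are odd and fix $0$. It bounds $\sum_i\E\Delta(\partial_iT_i)$ by $\KL{\src_t}{\src}$ through \eqref{eq:above-tangent}, which uses only convexity of the source potential. \Cref{lem:clipping} then converts this into control of the bounded coordinates $1-e^{-a_i|x_i|}$. No Lipschitz property of $T$ is used, which is why the free energy must be Lipschitz in $z$ rather than in $y$.

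\textbf{Step 2.} First, the joint-tail witness in Lata\l a's formula belongs to a difference $t-s$, not to a label. A single $a^t$ ``realizing the moment of the relevant differences'' therefore need not exist. Producing one witness per label that works against all others is the paper's main combinatorial step: \Cref{prop:common} combines the prefix-encoder count $\sum_te^{-\tau^E_t(A_t\setminus S)}\le(m+1)4^m$ with convex separation over distributions of encoders, followed by the graph selection of \Cref{cor:directed}. This requires labels already reduced to binary supports $A_t$ with common coefficients $r_i$ and $|A_t|\le hP$ for a \emph{small} $h$. That is Bednorz's reduction, whose other branch comes from Bernoulli minoration and contraction, not from Gaussian-mixture comparisons. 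Pair-dependent sets with $|I_{st}|\lesssim CP$ are not enough.

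Second, pigeonholing costs a factor equal to the number of configurations. That number is $e^{O(P)}$ at best, and $d$-dependent if the core is an arbitrary subset of $[d]$, so it can exhaust a family of size $e^P$. Also, ``small pairwise overlaps'' is weaker than what is used later, namely the exponential label-law tail bound \eqref{eq:S}. The paper obtains \eqref{eq:S} from one maximization of $\Phi(I,v)$ that loses only $e^{p_1/4}$ labels (\Cref{prop:spread}). That bound drives both the softening lemma and the overlap graph in \Cref{lem:signed}.

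Third, conditioning on $\{|X_i|\ge a_i,\ i\in J\}$ is the wrong way to remove the core. A width bound under the conditional law transfers back to $\src$, in general, only at the cost of the event's probability. The reason is that magnitudes of unconditional log-concave vectors need not be negatively dependent: for the density proportional to $e^{-\norm{x}_\infty}$ they are positively correlated. The paper instead deletes common coordinates by projection and Jensen over their signs (\Cref{lem:projection}), and never conditions the source.
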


The exponential cardinality in \eqref{eq:packing} matches the moment
order $P$. The proposed conclusion has no further dependence on the
dimension or on the pattern of dependence between the magnitudes.

This manuscript is a working proof candidate for verification. The
selection and transport lemmas below are parts of the proposed
argument, with proofs included. The external results used in the
argument are identified separately in \Cref{sec:inputs}.

\subsection{Historical background}\label{subsec:history}

For a standard Gaussian vector $G$, the classical inequality relates
Euclidean separation to metric entropy:
\[
 \E\max_{t\in T}\ip{t}{G}
 \ge c\,\varepsilon\sqrt{\log |T|}
 \quad\text{if }\norm{t-s}_2\ge\varepsilon\quad(s\ne t).
\]
Sudakov's work on Gaussian measures and $\varepsilon$-entropy
\cite{Sudakov} is the starting point for this line of research.
The moment formulation is natural because
$\norm{\ip{t-s}{G}}_P=\norm{G_1}_P\norm{t-s}_2$ and
$\norm{G_1}_P$ is comparable to $\sqrt P$. For $|T|\ge e^P$,
a Gaussian $L_P$ separation of size $A$ therefore gives a supremum
of order $A$.

Beyond the Gaussian setting, the metric must reflect more than a
covariance matrix. Talagrand proved a Bernoulli minoration
\cite{TalagrandBernoulli} and developed corresponding results for
canonical processes, including coordinates with densities proportional
to $\exp(-|x|^r)$, $r\ge1$ \cite{TalagrandCanonical}.
Lata\l a's work extended the minoration principle to independent
symmetric variables with log-concave tails \cite{Latala1997}.
These results establish the moment formulation in important product
models and supply the Bernoulli input used at the end of our argument.

The extension from product measures to general log-concave measures
was formulated and studied by Lata\l a \cite{LatalaSMP}.
His 2014 paper proves the desired principle for several invariant
classes, including rotationally invariant distributions and uniform
measures on $\ell_r$ balls. For arbitrary unconditional log-concave
vectors it also proves a weaker version with the larger cardinality
assumption $|T|\ge e^{P^2}$. The general question was formulated
independently by Mendelson, Milman, and Paouris; their geometric
program studies a dual minoration through centroid bodies and
dimension reduction \cite{MMP}. The theorem proposed here concerns
the unconditional class at the cardinality scale $e^P$.

Two earlier reductions are especially close to the present proof.
Lata\l a's moment description \cite{LatalaMoments} expresses a sparse
linear form through a feasible joint coordinate threshold.
Bednorz's reduction \cite{Bednorz} replaces a general family of labels
by sparse supports with common nonzero coordinate coefficients.
Later work treats products of radial-type log-concave measures
\cite{BednorzRadial}. Here the sparse reduction is used to expose
the remaining geometric problem: turn witnesses that depend on a
pair of labels into witnesses that can be used simultaneously.

Generalized Orlicz balls provide another useful point of reference.
Pilipczuk and Wojtaszczyk proved negative association of their
coordinate magnitudes \cite{PilipczukWojtaszczyk}.
Wojtaszczyk subsequently gave a simpler proof, and an extension to
related measures, using an argument modeled on geometric localization
\cite{Wojtaszczyk}. These results concern dependence of the source
coordinates. In the present construction, the product-type tail
estimate is instead obtained for the distribution of the selected
witnesses over the label set. This is why negative association does
not appear among the assumptions of the proposed theorem.

The analytic part uses the triangular-transport entropy calculation
underlying the weighted Poincar\'e approach of Cordero-Erausquin and
Gozlan \cite{CEG}. The additional step developed here is to control
bounded coordinate transforms directly by that transport cost.
The resulting estimate is then applied to an entropy-penalized
selection of witnesses. No localization theorem is used as an
external input to this final argument.

\subsection{The proof in five steps}\label{subsec:roadmap}

After the classical reduction, a label is a small set $A_t\subset[d]$
and the process has the form
\[
 X_t=\sum_{i\in A_t}r_iX_i .
\]
A \emph{witness} is a nonnegative vector of thresholds. Its feasibility
means that the magnitudes exceed those thresholds together with
probability at least $e^{-q}$. The witness explains how a pair of
supports can be far apart.

The proof organizes this information as follows.
\begin{enumerate}[leftmargin=*,itemsep=5pt]
\item \textbf{Choose witnesses simultaneously.}
In \Cref{sec:common}, prefix encoders describe the supports.
Convex separation chooses one distribution of encoders whose averages
are feasible witnesses. A graph selection then leaves a large family
on which $a_t(A_t\setminus A_s)$ is large for every ordered pair.

\item \textbf{Remove a common core once.}
Some coordinates may carry a large part of many witnesses.
In \Cref{sec:spread}, a single maximization chooses a subfamily and
a common core. Deleting that core preserves directed separation
and produces a quantitative bound on all joint witness tails.

\item \textbf{Prove concentration for bounded transforms.}
In \Cref{sec:transport}, a clipping inequality controls a
one-dimensional displacement by its entropy cost.
Triangular transport applies this estimate coordinate by coordinate
and gives Gaussian exponential moments for Lipschitz functions of
the transformed magnitudes.

\item \textbf{Pass from feasible events to a typical gain.}
In \Cref{sec:softening}, the capped gains are combined through a
Gibbs variational formula. Softening charges the loss of gain to
label entropy and makes the resulting function Lipschitz in the
coordinates from the previous step. Concentration then gives a
lower bound on its mean.

\item \textbf{Restore the signs.}
In \Cref{sec:signed}, many labels with substantial capped gain
contain a large subfamily with small overlaps. Their signed
coefficient vectors are separated in a Bernoulli moment metric.
Bernoulli minoration gives the required signed supremum.
\end{enumerate}

The common witness remains label-dependent: there is one $a_t$ for
each retained $t$, fixed simultaneously against all other retained
labels. The construction does not require a single threshold vector
shared by the whole family. Also, the core is deleted by projection.
The source is never conditioned on a cell in order to preserve
packing.

For a first reading, \Cref{thm:reduced} gives the intermediate statement
that links the classical reduction to the new argument.
\Cref{thm:spread-minoration} explains how spread and feasibility
combine to prove the lower bound. The coding details are in
\Cref{sec:common}, the analytic details in \Cref{sec:transport,sec:softening},
and the explicit constant choices in \Cref{sec:signed}.
The appendix records the dependencies in one place.

\section{Preliminaries and classical inputs}\label{sec:inputs}

\subsection{Notation and the two probability spaces}

There are two distinct kinds of averaging. The source law describes
the random magnitudes $Y$. The label law $\mu$ describes a finite
collection of deterministic witnesses $a_t$; a posterior $\pi$ is
another probability on that collection. Keeping these averages
separate is useful throughout the proof.

\begin{center}
\small
\begin{tabularx}{\linewidth}{@{}lX@{}}
\toprule
Symbol & Meaning\\
\midrule
$P$ & Moment order in the original packing.\\
$q,p_1,p$ & Logarithms of the numbers of labels after successive
selections; the final family has size $e^p$.\\
$A_t,\ m$ & Support of label $t$ and a common upper bound on its size.\\
$a_t,\ u$ & Feasible threshold witness and its mass scale.\\
$Y,\ \src$ & Magnitudes of the source; in the transport section,
$\src$ denotes the law of the full signed vector.\\
$\mu,\ \pi,\ K_\pi$ & Reference label law, posterior label law, and
relative entropy $D(\pi\Vert\mu)$.\\
$\theta,\ B$ & Threshold shrinkage factor and payoff bound $B=\theta u$.\\
$C_t,\ H_\eta,\ \widehat H_\eta$ & Capped gain, Gibbs free energy,
and softened free energy.\\
$\eta,\ \eta'$ & Temperature and the shifted temperature $\eta'=\eta+b$.\\
\bottomrule
\end{tabularx}
\end{center}

For a vector $a\ge0$ and $D\subset[d]=\{1,\ldots,d\}$, write
$a(D)=\sum_{i\in D}a_i$. For a compact set $K$, its support function is
$h_K(w)=\sup_{a\in K}\ip{w}{a}$. A subset of $\R_+^d$ is
\emph{downward closed} if $0\le b\le a\in K$ implies $b\in K$.
Relative entropy is
\[
 \KL{\rho}{\nu}=\int \log\!\left(\frac{d\rho}{d\nu}\right)d\rho
\]
when $\rho\ll\nu$, and is $+\infty$ otherwise. If $\mu$ is a probability
on a finite label set, we write $K_\mu(\pi)=\KL{\pi}{\mu}$.

\subsection{Signs, projections, and joint tails}
\begin{lemma}\label{lem:signs}
For an unconditional vector $X$, one may realize
$X=(\varepsilon_iY_i)_{i=1}^d$, where $Y=|X|$ and the
$\varepsilon_i$ are independent symmetric signs, independent of $Y$.
Coordinate projections and positive diagonal images of an
unconditional log-concave vector remain unconditional and log-concave.
\end{lemma}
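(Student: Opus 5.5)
The plan is to prove the two assertions of \Cref{lem:signs} separately. Both are elementary, and the only real care is in the definition of log-concavity for degenerate measures.

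\emph{Realization.} Let $\varepsilon=(\varepsilon_i)$ be independent symmetric signs, independent of $X$. Put $Y=|X|$ coordinatewise and $\widetilde X=(\varepsilon_i Y_i)$. I will show $\widetilde X\overset{d}{=}X$. Fix a deterministic sign vector $\delta$. Conditionally on $\varepsilon=\delta$, the vector $(\varepsilon_iX_i)$ equals $(\delta_iX_i)$, which has the law of $X$ by unconditionality. Averaging over $\delta$ gives $(\varepsilon_iX_i)\overset{d}{=}X$, and $\varepsilon$ is independent of $X$.

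Next write $X_i=\sigma_i Y_i$ with $\sigma_i=\operatorname{sgn}X_i$, choosing $\sigma_i=1$ when $X_i=0$. Then $\varepsilon_iX_i=(\varepsilon_i\sigma_i)Y_i$. Conditionally on $X$, the signs $(\varepsilon_i\sigma_i)$ are again independent and symmetric, so they are independent of $X$ and hence of $Y$. Therefore $(\varepsilon_iX_i)\overset{d}{=}(\varepsilon'_iY_i)$, where $\varepsilon'$ are independent symmetric signs independent of $Y$. On an enlarged probability space we may therefore take $X=(\varepsilon'_iY_i)$ with $Y=|X|$ exactly: construct the pair $(Y,\varepsilon')$ and define $X$ from it. This has the right law, and $|X|=Y$ holds pointwise.

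\emph{Stability.} A coordinate projection $x\mapsto(x_i)_{i\in I}$ and a positive diagonal map $x\mapsto(\lambda_ix_i)$ are linear. Linear images of log-concave measures are log-concave; this is part of the measure-sense definition $\mu(\lambda K+(1-\lambda)L)\ge\mu(K)^\lambda\mu(L)^{1-\lambda}$, and it is preserved because preimages of convex combinations of sets contain the corresponding convex combinations of preimages. The step that needs the most attention is degenerate supports: the image measure may live on a lower-dimensional subspace. This is exactly why the measure-sense definition is used; the Brunn--Minkowski-type inequality transfers directly without needing densities. I will cite Borell's characterization for equivalence with the density formulation if that formulation is needed.

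Unconditionality is preserved because both maps commute with sign changes $x\mapsto(\delta_ix_i)$, up to restricting $\delta$ to $I$ for projections. Then $(\delta_i\lambda_iX_i)=D_\lambda(\delta X)\overset{d}{=}D_\lambda X$, and similarly for projections.
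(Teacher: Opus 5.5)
Your proposal is correct and follows the paper's own argument. The paper also averages test functions over the sign group, and your coupling $\varepsilon'=\varepsilon\sigma$ is the random-variable form of that averaging; the stability claims likewise come from sign-invariance and from log-concavity being preserved under linear maps, as in the paper.
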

\begin{proof}
Averaging any bounded measurable function over the sign group shows
that its expectation under $X$ equals its expectation under
$(\varepsilon_i|X_i|)_i$. The other assertions follow from invariance
under signs and preservation of log-concavity by linear images.
\end{proof}

Let $Y=|X|$ and define
\[
 Q_Y(v)=\Prob(Y_i\ge v_i\text{ for all }i),\qquad v\in\R_+^d.
\]
The law of $Y$ is log-concave on the positive orthant: in the
nondegenerate case its density is $2^d$ times the restriction of the
unconditional density of $X$. Zero coordinates can be removed in the
degenerate case.

\begin{lemma}[Tail bodies]\label{lem:tails}
The function $Q_Y$ is log-concave and upper semicontinuous, with
$Q_Y(0)=1$. Consequently
\begin{equation}\label{eq:tail-shrink}
 Q_Y(\theta v)\ge Q_Y(v)^\theta,\qquad 0<\theta\le1.
\end{equation}
For $q,u>0$ and $A\subset[d]$, the set
\begin{equation}\label{eq:tail-body}
 K_A(q,u)=\{a\ge0:\supp a\subset A,\ Q_Y(a)\ge e^{-q},
                         \ a([d])\le u\}
\end{equation}
is compact, convex, and downward closed.
\end{lemma}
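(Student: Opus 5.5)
The idea is to write $Q_Y$ as the measure of a translated orthant and use Prékopa--Leindler. Let $\nu$ be the law of $Y$ on $\R_+^d$, which is log-concave by the remark preceding the lemma. In the degenerate case we first pass to the linear span of the support, as that remark allows. For $v\in\R_+^d$ set $O_v=\{y:y_i\ge v_i\ \forall i\}=v+\R_+^d$, so that $Q_Y(v)=\nu(O_v)$.

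\emph{Log-concavity.} For $\lambda\in[0,1]$ we have the Minkowski identity $\lambda O_v+(1-\lambda)O_w=O_{\lambda v+(1-\lambda)w}$, because $\R_+^d$ is a convex cone. The inclusion $\subseteq$ is all we need. Log-concavity of the measure $\nu$ then gives
\[
Q_Y(\lambda v+(1-\lambda)w)\ge Q_Y(v)^\lambda Q_Y(w)^{1-\lambda}.
\]
Since $Y\ge0$ we get $Q_Y(0)=1$.

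\emph{Upper semicontinuity.} Take $v^{(n)}\to v$. Any point lying in infinitely many of the closed sets $O_{v^{(n)}}$ lies in $O_v$, since $O_v$ is closed. Hence $\limsup_n \one_{O_{v^{(n)}}}\le \one_{O_v}$, and reverse Fatou gives $\limsup_n Q_Y(v^{(n)})\le Q_Y(v)$.

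\emph{Shrinking inequality.} Apply log-concavity with $w=0$ and $\lambda=\theta$:
\[
Q_Y(\theta v)\ge Q_Y(v)^\theta Q_Y(0)^{1-\theta}=Q_Y(v)^\theta,
\]
which is \eqref{eq:tail-shrink}.

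\emph{Properties of $K_A(q,u)$.}
\begin{itemize}
\item \emph{Convexity.} $K_A(q,u)$ is the intersection of three convex sets: the cone $\{a\ge0,\ \supp a\subset A\}$; the half-space $\{a([d])\le u\}$; and the superlevel set $\{Q_Y\ge e^{-q}\}$. The last set is convex because $\log Q_Y$ is concave.
\item \emph{Compactness.} The set is bounded, since $0\le a_i\le u$ for every $i$. It is closed because the first two sets are closed and $\{Q_Y\ge e^{-q}\}$ is closed by upper semicontinuity. Boundedness and closedness give compactness.
\item \emph{Downward closedness.} If $0\le b\le a$, then $O_a\subset O_b$, so $Q_Y(b)\ge Q_Y(a)\ge e^{-q}$. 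Moreover $\supp b\subset\supp a\subset A$ and $b([d])\le a([d])\le u$, so $b\in K_A(q,u)$.
\end{itemize}

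The only delicate point is the measure-theoretic log-concavity of $\nu$ on the orthant, particularly in the degenerate case. We rely on the preceding remark for this: restricting the unconditional log-concave law of $X$ to $\R_+^d$ and then multiplying by $2^d$ preserves the Borell inequality for convex subsets of the orthant. This suffices, because every $O_v$ with $v\ge0$ is a convex subset of the orthant.
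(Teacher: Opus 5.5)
Your proof is correct and follows the paper's argument essentially step for step. Like the paper, you use the orthant identity $\lambda O_v+(1-\lambda)O_w=O_{\lambda v+(1-\lambda)w}$ together with log-concavity of the law of $Y$, reverse Fatou for upper semicontinuity, interpolation with $0$ for \eqref{eq:tail-shrink}, and closedness, convexity and monotonicity of the superlevel set plus the mass cap for $K_A(q,u)$; your remarks on which inclusion is needed and on passing to the coordinate span in the degenerate case only spell out what the paper leaves to its preceding remark.
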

\begin{proof}
For upper orthants $O_v=v+\R_+^d$,
$(1-\theta)O_v+\theta O_w=O_{(1-\theta)v+\theta w}$.
The defining inequality for a log-concave measure gives log-concavity
of $Q_Y$. If $v_n\to v$, then
$\limsup_n\one_{\{Y\ge v_n\}}\le\one_{\{Y\ge v\}}$;
reverse Fatou proves upper semicontinuity. Interpolating between $0$
and $v$ gives \eqref{eq:tail-shrink}. The superlevel set in
\eqref{eq:tail-body} is closed, convex, and downward closed.
The mass cap makes it bounded.
\end{proof}

\subsection{The three external ingredients}
The following statements are used as classical results. We include
the quantitative form of the support reduction to make the final
choice of constants unambiguous.

\begin{classical}[Sparse moment witness]\label{input:witness}
There is a universal $C_w\ge1$ such that, if $X$ is unconditional
and log-concave, $P\ge1$, $|J|\le P$, and $r_i\ge0$, then there exists
$z\in\R_+^J$ satisfying
\begin{equation}\label{eq:sparse-witness}
 \Prob(|X_i|\ge z_i,\ i\in J)\ge e^{-P},
 \qquad
 \sum_{i\in J}r_i z_i
 \ge C_w^{-1}\norm{\sum_{i\in J}r_iX_i}_P.
\end{equation}
This is the sparse-support case of \cite{LatalaMoments}; see also
\cite[Theorem 1]{Bednorz}.
\end{classical}

\begin{classical}[Bernoulli Sudakov minoration]\label{input:bernoulli}
There is a universal $c_B>0$ such that, whenever $v_t\in\R^d$,
$|T|\ge e^q$, $q\ge1$, and
\[
 \norm{\sum_i(v_{t,i}-v_{s,i})\varepsilon_i}_q\ge b
 \quad(s\ne t),
\]
one has
\[
 \E_\varepsilon\max_{t\in T}|\ip{v_t}{\varepsilon}|\ge c_Bb.
\]
See \cite{TalagrandBernoulli} and
\cite[Example 1.4]{LatalaSMP}. We decrease $c_B$ when harmless.
\end{classical}

\begin{classical}[Sparse binary reduction]\label{input:reduction}
Fix $h,e>0$. There are $c_{h,e}>0$ and
$P_0(h,e)<\infty$ such that the following holds for isotropic
unconditional log-concave $X$ satisfying \eqref{eq:packing}, with
$P\ge P_0(h,e)$. Either $\W_X(T)\ge c_{h,e}A$, or there are
$n=\lceil e^{P/4}\rceil$ distinct supports $A_t$ and weights $r_i\ge0$
such that
\begin{align}
 |A_t|&\le hP,\label{eq:reduction-size}\\
 \norm{\sum_i r_i(\one_{A_t}(i)-\one_{A_s}(i))X_i}_P
      &\ge A/2\quad(s\ne t),\label{eq:reduction-sep}\\
 M_{\rm bin}:=\E\max_t\left|\sum_{i\in A_t}r_iX_i\right|
      &\le 2\W_X(T)+eA.\label{eq:reduction-transfer}
\end{align}
This is the quantitative form of the construction in
\cite[Proposition 2 and Corollary 2]{Bednorz}.
\end{classical}

Here is how its two adjustable parameters arise. Normalize the
original separation to $P$. If the Bernoulli $L_P$ metric has a
large enough packing at radius proportional to $\delta P$, Bernoulli
minoration and the contraction argument in the cited proof give the
first alternative. Otherwise translate a large subset into a
Bernoulli $L_P$ ball of radius $\delta P$. Exponential rounding then
produces a further subset of cardinality at least $e^{P/4}$ and
approximants $\varphi(t)$ with common nonzero coordinate values and
\[
 |\supp\varphi(t)|\le\frac{P}{4\log(1/\rho)},\qquad
 \norm{\ip{t-\varphi(t)}{X}}_P\le C(\rho+\delta)P,
 \qquad \frac{\rho}{\log(1/\rho)}=C'\delta.
\]
The rounded coordinate values may be taken nonnegative by
unconditionality. Decreasing $\rho$ first makes the support bound
at most $hP$ and the moment error at most $P/4$. Decreasing it
further makes the expected maximum error at most $eP$, because
\[
 \E\max_{1\le j\le n}|V_j|
 \le n^{1/P}\max_j\norm{V_j}_P
\]
and $n^{1/P}$ is bounded for large $P$. If $t_0$ is the translation
anchor, symmetry gives
\[
 \E\max_t|\ip{t-t_0}{X}|
 \le \E\bigl(\max_{t\in T}\ip{t}{X}
                   -\min_{t\in T}\ip{t}{X}\bigr)
 =2\W_X(T).
\]
This proves the transfer estimate in the stated form. In particular,
$h$ and $e$ can be chosen independently before applying the reduction.

We also use the scalar log-concave moment comparison
\begin{equation}\label{eq:scalar}
 \norm{V}_P\le C_{\rm reg}P\,\E|V|,\qquad P\ge1,
\end{equation}
for a centered log-concave real variable; see
\cite[Section 2]{LatalaSMP}. It handles the bounded range of $P$.

\section{Common witnesses from prefix encoders}\label{sec:common}

Pairwise witnesses may point in different coordinate directions.
The aim of this section is to make these choices compatible:
each retained label should have one witness that separates it from
every other retained label. The only ingredients are convexity of
the witness bodies and a counting inequality for support encoders.

This section is entirely deterministic. Let $\mathcal T$ be a finite
label set of size $N$, put $q=\log N$, and let
$A_t\subset[d]$ with $|A_t|\le m$. Suppose that $K_t$ is a compact,
convex, downward closed subset of $\R_+^{A_t}$, containing $0$.
Assume that for some $u>0$, each pair $s\ne t$ satisfies
\begin{equation}\label{eq:H}
 \max\{h_{K_t}(\one_{A_t\setminus A_s}),
        h_{K_s}(\one_{A_s\setminus A_t})\}\ge u.
\end{equation}

The directed alternative \eqref{eq:H} says that one of the two
supports has at least $u$ units of feasible witness mass outside
the other. It does not specify in advance which direction works,
or which point in the corresponding body should be used.

\subsection{An encoder inequality}
An encoder $E$ chooses distinct subsets $C_t\subset A_t$ and uses the
natural coordinate order to write each $C_t$ as an increasing word.
Group labels according to $k=|C_t|$, and let $N_k$ be the size of that
group. Along the word $(i_1,\ldots,i_k)$ for $t$, let $n_j$ be the
number of words in the group having that prefix of length $j$.
Thus $n_0=N_k$ and $n_k=1$. Set
\begin{equation}\label{eq:encoder}
 \tau^E_{t,i_j}=\log(n_{j-1}/n_j),\qquad
 \tau^E_{t,i}=0\quad(i\notin C_t).
\end{equation}
All entries are nonnegative, and
$\tau^E_t([d])=\log N_k\le q$. The empty-word group has at most one
label and contributes the zero vector.

The charge $\tau^E_{t,i_j}$ measures the logarithmic reduction in
the number of possible labels when the next coordinate is revealed.
For example, the four words
\[
 (1,3),\ (1,4),\ (2,3),\ (2,4)
\]
have two charges $\log2$ each. Each full word has weight $1/4$.
If coordinates in $S=\{1,2\}$ are omitted from the charge, each word
has weight $1/2$, and their total weight becomes $2$.
The next lemma bounds this increase when at most $m$ coordinates
are omitted. This is the counting estimate needed for comparison
with another support.

\begin{lemma}[Prefix counting]\label{lem:prefix}
For every $S\subset[d]$ with $|S|\le m$ and every encoder,
\begin{equation}\label{eq:prefix}
 \sum_{t\in\mathcal T}
       \exp\{-\tau^E_t(A_t\setminus S)\}
 \le B_m,\qquad B_m=(m+1)4^m.
\end{equation}
\end{lemma}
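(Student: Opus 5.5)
The plan is to split the sum by word length, reduce each group to a weighted count on the prefix trie of its words, and bound that weighted count by an induction over the trie. First note that $\tau^E_t$ is supported on $C_t\subset A_t$, so $\tau^E_t(A_t\setminus S)=\tau^E_t(C_t\setminus S)$. Group the labels by $k=|C_t|\in\{0,1,\ldots,m\}$; there are at most $m+1$ groups. It therefore suffices to show that each group contributes at most $4^m$. The empty-word group has at most one label and contributes at most $1$.

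Fix $k\ge1$ and form the trie of the increasing words in the group. A node $v$ is a prefix, $n_v$ is the number of words through $v$, and the edge from $v$ to a child $c$ carries the letter appended. For a word $(i_1,\ldots,i_k)$ we have
\[
 \exp\{-\tau^E_t(C_t\setminus S)\}=\prod_{j:\,i_j\notin S}\frac{n_j}{n_{j-1}} .
\]
So the group sum equals $f(\text{root})$, where $f$ is defined recursively by $f(\text{leaf})=1$ and
\[
 f(v)=\sum_{c:\,\text{letter}\notin S}\frac{n_c}{n_v}f(c)+\sum_{c:\,\text{letter}\in S}f(c).
\]
In words, the edges with letters outside $S$ are averaged, while the edges with letters in $S$ are summed without any discount.

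The key step is an induction from the leaves upward with a bound in two parameters. Let $\ell(v)=k-|v|$ be the remaining length, and let $r(v)$ be the number of letters of $S$ exceeding the last letter of $v$; at the root take $r\le|S|\le m$. Since words are increasing, only these $r(v)$ letters of $S$ can occur below $v$. I claim
\[
 f(v)\le 2^{r(v)+\ell(v)} .
\]
The bound holds at the leaves, where $f=1$. At an inner node $v$ with $r=r(v)$ and $\ell=\ell(v)$, we bound the two parts of the recursion separately.
\begin{itemize}
\item The children with letters outside $S$ have weights $n_c/n_v$ summing to at most $1$. Each such child has $r(c)\le r$ and $\ell(c)=\ell-1$. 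Their contribution is therefore at most $2^{r+\ell-1}$.
\item The children with letters in $S$ have distinct letters. If the letter of such a child is the $j$-th largest element of $S$ among the $r$ available, then $r(c)=j-1$ and $\ell(c)=\ell-1$. Their contribution is therefore at most $\sum_{j=1}^{r}2^{j-1+\ell-1}<2^{r+\ell-1}$.
\end{itemize}
Adding the two parts gives $f(v)\le 2^{r+\ell}$. At the root this yields $f\le 2^{m+k}\le 4^m$, and summing over the at most $m+1$ groups gives $B_m=(m+1)4^m$.

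The main obstacle is choosing an induction quantity that closes. A bound depending only on $r$, such as $2^r$, fails, because the averaged part and the summed part each already use up the full budget. Adding the remaining length $\ell$ as a second exponent supplies exactly the extra factor of $2$ per level that is needed. This costs $2^k\le2^m$, which is why the constant is $4^m$ rather than $2^m$.
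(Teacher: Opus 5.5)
Your proof is correct. It reaches the same per-length bound $2^{k}2^{|S|}$ as the paper, but by a different mechanism.

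\textbf{The paper's route.} The paper splits the length-$k$ group into at most $2^k2^{|S|}$ classes. A class is indexed by the set $J\subset[k]$ of positions carrying letters of $S$ and by $D=C_t\cap S$. Because words are increasing, a class determines the letters at the positions in $J$. The paper then runs one random walk down the trie. At positions in $J$ the walk is forced to take the prescribed letter; elsewhere it moves to a child with probability $n_j/n_{j-1}$. The weight $\exp\{-\tau^E_t(A_t\setminus S)\}$ of a word in the class is then exactly the probability that the walk ends at that word, so each class sums to at most one.

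\textbf{Your route.} You dispense with both the class decomposition and the coupling. You prove the bound by a local induction on the trie with potential $2^{r(v)+\ell(v)}$, and monotonicity of the words enters only through $r(v)$.

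\textbf{Comparison.} The paper's argument explains where the loss comes from: it counts the unknown patterns of $S$-positions and $S$-letters, and each pattern contributes a sub-probability. Yours is a purely local verification that needs no probabilistic interpretation. As a small bonus, your recursion is solved exactly by $\binom{r+\ell}{\ell}$, which gives $\binom{|S|+k}{k}$ per group. The paper's class count yields the same binomial once one uses $|J|=|D|$.
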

\begin{proof}
Fix a word length $k$. Classify its words by the set of positions
$J\subset[k]$ occupied by coordinates in $S$, and by the set
$D=C_t\cap S$. There are at most $2^k2^{|S|}$ classes.
Within a class the increasing order of coordinates fixes the entries
at positions $J$.

Consider a random walk down the prefix tree of the entire length-$k$
group. At a position in $J$, force the prescribed entry, stopping if
that extension is absent. At every other position choose the next
entry with its original conditional probability $n_j/n_{j-1}$.
The probability of arriving at a given word in the class is
\[
 \prod_{j\notin J}\frac{n_j}{n_{j-1}}
 =\exp\{-\tau^E_t(C_t\setminus S)\}
 =\exp\{-\tau^E_t(A_t\setminus S)\}.
\]
Different complete words are disjoint outcomes, so their
probabilities sum to at most one. Sum over classes and then over
$0\le k\le m$ to obtain
$\sum_{k=0}^m2^k2^{|S|}\le(m+1)4^m$.
\end{proof}

\subsection{One distribution of encoders}

The encoder inequality holds for every encoder, but a particular
encoder need not produce feasible thresholds. We average over
encoders and use convexity to choose an average that lies in all
the required witness bodies at once. The separating functional
in the proof can be interpreted as a system of coordinate costs.

\begin{proposition}[Simultaneous witnesses]\label{prop:common}
Under \eqref{eq:H}, there is a probability distribution on encoders
such that
\begin{equation}\label{eq:common-a}
 a_t=\frac{u}{2q}\E_E\tau^E_t\in K_t
 \quad\text{for every }t.
\end{equation}
These witnesses satisfy $a_t([d])\le u/2$ and, for each $s$,
\begin{equation}\label{eq:common-count}
 \sum_t\exp\!\left\{-\frac{2q}{u}a_t(A_t\setminus A_s)\right\}
 \le B_m.
\end{equation}
\end{proposition}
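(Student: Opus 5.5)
The plan is to phrase the existence of the encoder distribution as a finite-dimensional convex feasibility problem and to rule out infeasibility by a separating hyperplane. The two numerical consequences then follow from the definition \eqref{eq:encoder} and \Cref{lem:prefix}.

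\textbf{Setting up the separation.} There are finitely many encoders, since each one is a choice of distinct subsets $C_t\subset A_t$. Hence the set
\[
\mathcal C=\Bigl\{\bigl(\E_E\tau^E_t\bigr)_{t\in\mathcal T}\Bigr\}\subset\prod_t\R^{A_t},
\]
taken over all distributions of encoders, is a polytope: the convex hull of the vectors $(\tau^E_t)_t$. The target $\mathcal K=\prod_t\frac{2q}{u}K_t$ is compact and convex. If $\mathcal C\cap\mathcal K=\emptyset$, strict separation gives $w=(w_t)_t$ with
\[
\sum_t\ip{w_t}{\tau^E_t}>\frac{2q}{u}\sum_t h_{K_t}(w_t)\qquad\text{for every encoder }E.
\]
Replacing $w_t$ by its positive part $w_t^+$ changes nothing. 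Downward closedness and $K_t\subset\R_+^{A_t}$ give $h_{K_t}(w_t)=h_{K_t}(w_t^+)$, and $\tau^E_t\ge0$ gives $\ip{w_t}{\tau^E_t}\le\ip{w_t^+}{\tau^E_t}$. So we may assume $w_t\ge0$. It then suffices to exhibit a single encoder with $\ip{w_t}{\tau^E_t}\le\frac{2q}{u}h_{K_t}(w_t)$ for every $t$.

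\textbf{The cheap-coordinate encoder (main step).} Put $h_t=h_{K_t}(w_t)$ and
\[
C_t=\{i\in A_t:\ w_{t,i}\le 2h_t/u\}.
\]
The key claim, where \eqref{eq:H} enters, is that these sets are distinct. Take $s\ne t$, and by \eqref{eq:H} assume, after possibly swapping the roles, that there is $a\in K_t$ with $a(A_t\setminus A_s)\ge u$. Suppose $C_t\cap(A_t\setminus A_s)=\emptyset$. Then every $i\in A_t\setminus A_s$ has $w_{t,i}>2h_t/u$. If $h_t>0$ this gives $h_t\ge\ip{w_t}{a}>2h_t$, which is impossible. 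If $h_t=0$, then $\ip{w_t}{a}=0$ forces $a$ to vanish wherever $w_{t,i}>0$, so $a(A_t\setminus A_s)=0<u$, again a contradiction.

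Hence $C_t$ contains a coordinate outside $A_s\supset C_s$, so $C_t\ne C_s$, and these sets define an encoder. Since $\tau^E_t$ is supported on $C_t$ with total mass at most $q$, we get
\[
\ip{w_t}{\tau^E_t}\le\frac{2h_t}{u}\,q ,
\]
which contradicts the separation. This step is the crux. The delicate points are the direction ambiguity in \eqref{eq:H}, which is harmless because distinctness is a symmetric relation, and the degenerate case $h_t=0$.

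\textbf{Consequences.} Now $a_t=\frac{u}{2q}\E_E\tau^E_t\in K_t$. Since $\tau^E_t([d])\le q$ for every encoder, $a_t([d])\le u/2$. For \eqref{eq:common-count}, fix $s$ and note that $\frac{2q}{u}a_t(A_t\setminus A_s)=\E_E\tau^E_t(A_t\setminus A_s)$. Convexity of $\exp(-\cdot)$ (Jensen) gives
\[
\exp\bigl\{-\E_E\tau^E_t(A_t\setminus A_s)\bigr\}\le\E_E\exp\bigl\{-\tau^E_t(A_t\setminus A_s)\bigr\}.
\]
Summing over $t$, exchanging sum and expectation, and applying \Cref{lem:prefix} with $S=A_s$ (where $|A_s|\le m$) to each encoder yields the bound $B_m$.
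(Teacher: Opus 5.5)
Your proposal is correct and follows essentially the same route as the paper. The paper also uses convex separation with a functional made nonnegative via downward closure, the cheap-coordinate encoder $C_t=\{i\in A_t: w_{t,i}\le 2h_{K_t}(w_t)/u\}$ whose distinctness is forced by \eqref{eq:H}, and then Jensen combined with prefix counting at $S=A_s$. The only differences are cosmetic: you rescale $\mathcal K$ rather than $\mathcal C$, and you argue distinctness by contradiction rather than by the paper's observation that at most $u/2$ of the witness mass can lie outside $C_t$.
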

\begin{proof}
Let $\mathcal C$ be the convex hull of all arrays
$(\tau_t^E)_{t\in\mathcal T}$, and put
$\mathcal K=\prod_tK_t$. There are only finitely many encoders.
We show that $(u/(2q))\mathcal C$ meets $\mathcal K$.

\emph{Step 1: find distinct inexpensive cores.}
Fix nonnegative costs $w_t$ and put $H_t=h_{K_t}(w_t)$. Define
\[
 C_t=\{i\in A_t:w_{t,i}\le2H_t/u\}.
\]
These subsets are distinct. Indeed, if the first alternative in
\eqref{eq:H} holds, take $v\in K_t$, supported on
$A_t\setminus A_s$, with $v([d])\ge u$; downward closure allows the
restriction to that difference. If $H_t>0$, the mass of $v$ outside
$C_t$ is at most $u/2$, since $\ip{w_t}{v}\le H_t$.
If $H_t=0$, all its positive mass lies where $w_{t,i}=0$.
In either case $C_t\setminus A_s$ is nonempty, so
$C_t\ne C_s\subset A_s$. The other alternative is symmetric.

\emph{Step 2: bound the cost of the encoder.}
For the resulting encoder,
\[
 \ip{w_t}{\tau_t^E}
 \le\frac{2H_t}{u}\tau_t^E([d])
 \le\frac{2q}{u}H_t.
\]
Consequently
\begin{equation}\label{eq:sep-dual}
 \min_{b\in (u/(2q))\mathcal C}\sum_t\ip{w_t}{b_t}
 \le\sum_t h_{K_t}(w_t)
 \qquad(w_t\ge0).
\end{equation}
\emph{Step 3: apply convex separation.}
If the two compact convex sets were disjoint, a separating
functional would make the opposite strict inequality hold.
It can be chosen nonnegative: replacing each coefficient $w_{t,i}$
by its positive part leaves the support function of the downward
closed set $\mathcal K$ unchanged and can only increase the
functional on the nonnegative set $\mathcal C$. This contradicts
\eqref{eq:sep-dual}.

An intersection point gives \eqref{eq:common-a}. The mass bound
follows from $\tau_t^E([d])\le q$. Jensen's inequality and
\Cref{lem:prefix}, with $S=A_s$, give
\[
 \sum_t e^{-(2q/u)a_t(A_t\setminus A_s)}
 \le \E_E\sum_t e^{-\tau_t^E(A_t\setminus A_s)}
 \le B_m.
\]
\end{proof}

\begin{corollary}[Directed separation on a large subfamily]
\label{cor:directed}
If $q\ge64$ and $m\le q/64$, there is a subfamily
$\mathcal T_1$ of size at least $e^{q/2}$ such that
\begin{equation}\label{eq:directed}
 a_t(A_t\setminus A_s)\ge u/8
 \quad(s,t\in\mathcal T_1,\ s\ne t).
\end{equation}
\end{corollary}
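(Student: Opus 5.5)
Starting from \Cref{prop:common}, we build a directed "bad" graph on $\mathcal T$ and take an independent set in it. For ordered pairs $(t,s)$ with $t\ne s$, call the pair \emph{bad} if $a_t(A_t\setminus A_s)<u/8$.

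For each fixed $s$, a bad $t$ contributes to \eqref{eq:common-count} a term
\[
\exp\{-(2q/u)a_t(A_t\setminus A_s)\}>\exp\{-q/4\}.
\]
Hence the number of $t$ with $(t,s)$ bad is less than $B_m e^{q/4}$. Since $m\le q/64$, we have
\[
\log B_m=\log(m+1)+m\log4\le \log(q/64+1)+(q/64)\log4.
\]
Using $q\ge64$ (which gives $\log(q/64+1)\le q/64$), this is at most $q/64+q\log4/64<q/24$. So every vertex $s$ has in-degree less than $e^{q/4+q/24}=e^{7q/24}$ in the bad digraph.

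Now pass to the undirected graph $G$ in which $s\sim t$ if either $(t,s)$ or $(s,t)$ is bad. Its total number of edges is at most the number of bad ordered pairs, which is less than $N e^{7q/24}$. The average degree is therefore at most $2e^{7q/24}$. By Turán's theorem (or a greedy/Caro--Wei argument: repeatedly pick a vertex of minimum degree and delete its neighbours), $G$ has an independent set of size at least
\[
\frac{N}{1+2e^{7q/24}}\ge \frac{e^q}{3e^{7q/24}}=\tfrac13 e^{17q/24}.
\]
Since $q\ge64$, we have $\tfrac13 e^{17q/24}\ge e^{q/2}$, because $5q/24\ge\log 3$. Take $\mathcal T_1$ to be this independent set. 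Every ordered pair in it is good, which is exactly \eqref{eq:directed}.

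The only delicate point is the bookkeeping of $B_m$ against $e^{q/4}$. One must check that the constants $q\ge64$ and $m\le q/64$ leave enough room, i.e.\ that $\log B_m + q/4 + \log 3 \le q/2$. The computation above shows there is comfortable margin. Note also that only in-degrees are controlled by \eqref{eq:common-count}, since that bound sums over $t$ for fixed $s$. This is why we symmetrize and use the edge count, i.e.\ average degree, rather than maximum degree.
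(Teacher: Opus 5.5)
Your proof is correct and follows the paper's argument essentially step for step. You bound, for each fixed $s$, the number of bad $t$ by $B_m e^{q/4}$ using \eqref{eq:common-count}, symmetrize into an undirected graph with at most $N B_m e^{q/4}$ edges, and extract an independent set of size at least $N/(1+2B_m e^{q/4})\ge e^{q/2}$ by the Tur\'an bound. The differences are only in bookkeeping. You use $\log(m+1)\le m$ where the paper uses $m+1\le 2^m$. Also, $\log(1+q/64)\le q/64$ holds for every $q\ge 0$, so $q\ge 64$ is actually needed only in the final comparison $5q/24\ge\log 3$.
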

\begin{proof}
For fixed $s$, \eqref{eq:common-count} bounds the number of $t$
with $a_t(A_t\setminus A_s)<u/8$ by
$D=B_me^{q/4}$. Join two distinct labels by an undirected edge
if either directed inequality fails. This graph has at most $ND$
edges. The elementary independent-set bound
$\alpha(G)\ge N^2/(N+2|E(G)|)$ gives an independent set of size at
least $N/(1+2D)$.

For $m\ge1$, $\log B_m\le m\log8$ because $m+1\le2^m$.
Hence, using $q\ge64$,
\[
 \log(1+2D)\le\log3+\frac q4+\frac{q\log8}{64}
 \le q/2.
\]
The case $m=0$ cannot satisfy \eqref{eq:H} for distinct labels.
\end{proof}

\section{A single core selection and witness spread}\label{sec:spread}

The preceding selection guarantees separation, but the witnesses may
still overlap heavily. A common coordinate core causes no separation
between labels, so it can be removed. For supports of the form
$A_t=I\cup B_t$, the signed sum over $I$ is a common random term;
averaging over its signs and applying Jensen cannot decrease the
expected residual maximum.

The issue is to choose the core without spending too many labels.
The objective below rewards a large retained family, the number of
common coordinates, and the amount of common witness mass.
At a maximizer, any further common requirement must remove a
controlled proportion of the remaining labels. This gives all the
spread inequalities in one step.

Suppose now that a family $\mathcal T_1$ of size $e^{p_1}$ has
supports of size at most $m\ge1$ and witnesses $a_t$ of mass at most
$u$. The witnesses are fixed throughout this selection.

\begin{proposition}[Common-core optimization]\label{prop:spread}
There are a subfamily $\mathcal S\subset\mathcal T_1$,
$|\mathcal S|\ge e^{3p_1/4}$, and a set
$I\subset\bigcap_{t\in\mathcal S}A_t$ such that, after deleting
coordinates in $I$, the uniform label law $\mu$ satisfies
\begin{equation}\label{eq:spread-strong}
 \mu\{J\subset A_t,\ a_{t,j}\ge v_j\ (j\in J)\}
 \le \exp\!\left\{-\frac{p_1}{8m}|J|
                     -\frac{p_1}{8u}\sum_{j\in J}v_j\right\}
\end{equation}
for every $J\subset I^c$ and $v_j\ge0$.
Any directed separation \eqref{eq:directed} is preserved.
\end{proposition}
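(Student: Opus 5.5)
We will choose the pair $(\mathcal S,I)$ by maximizing a single potential over all candidates, namely over pairs with $\mathcal S\subset\mathcal T_1$ and $I\subset\bigcap_{t\in\mathcal S}A_t$. We then read off \eqref{eq:spread-strong} from the fact that no admissible enlargement of $I$ increases the potential. Concretely, we maximize
\[
 \Phi(\mathcal S,I,w)=\log|\mathcal S|+\frac{p_1}{8m}|I|+\frac{p_1}{8u}\sum_{i\in I}w_i,
\]
where the enlarged candidates also carry thresholds $w\in\R_+^I$ with $a_{t,i}\ge w_i$ for all $t\in\mathcal S$ and $i\in I$. Since the $a_t$ are fixed, we may take $w_i=\min_{t\in\mathcal S}a_{t,i}$, and the maximum is attained among finitely many configurations. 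The starting configuration $(\mathcal T_1,\emptyset)$ has value $p_1$. The extra terms are bounded, because $|I|\le m$ and $\sum_{i\in I}w_i\le a_t(I)\le u$ for any $t\in\mathcal S$. Hence they contribute at most $p_1/8+p_1/8=p_1/4$, and any maximizer satisfies $\log|\mathcal S|\ge p_1-p_1/4=3p_1/4$. This is the cardinality claim.

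For the spread inequality, fix $J\subset I^c$ and $v_j\ge0$, and let $\mathcal S'$ be the set of labels $t\in\mathcal S$ with $J\subset A_t$ and $a_{t,j}\ge v_j$ for $j\in J$. If $\mathcal S'$ is empty there is nothing to prove. Otherwise $(\mathcal S',I\cup J)$ is admissible with thresholds at least $(w,v)$, so maximality gives
\[
 \log|\mathcal S'|+\frac{p_1}{8m}(|I|+|J|)+\frac{p_1}{8u}\Bigl(\sum_I w_i+\sum_J v_j\Bigr)\le \log|\mathcal S|+\frac{p_1}{8m}|I|+\frac{p_1}{8u}\sum_I w_i .
\]
Here the thresholds on $I$ for $\mathcal S'$ can only increase, since a minimum over a subfamily is at least the minimum over the family, so they may be replaced by $w$. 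Rearranging gives $\mu(\mathcal S')=|\mathcal S'|/|\mathcal S|\le\exp\{-\frac{p_1}{8m}|J|-\frac{p_1}{8u}\sum_J v_j\}$, which is \eqref{eq:spread-strong}. After deletion the witnesses are restricted to $I^c$; for $J\subset I^c$ the event is unchanged, because the conditions only involve coordinates in $J$.

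Preservation of directed separation is immediate. Since $I\subset A_s$ for every $s\in\mathcal S$, the set $(A_t\setminus I)\setminus(A_s\setminus I)$ equals $A_t\setminus A_s$, and the restricted witness agrees with $a_t$ there. So $a_t(A_t\setminus A_s)$ is unchanged for every ordered pair.

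The main obstacle is bookkeeping rather than difficulty. We must make sure the potential is defined with the minimum thresholds, so that passing to a subfamily is monotone. We also need the bound $\sum_I w_i\le u$, which requires $\mathcal S\neq\emptyset$, and this is guaranteed by the cardinality estimate. Finally, the mass and size bounds must hold for every candidate, not just the maximizer.
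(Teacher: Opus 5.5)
Your argument is correct and is essentially the paper's proof. The paper maximizes the same potential $\log|\mathcal S(I,v)|+\tau_1|I|+\lambda_1\sum_{i\in I}v_i$, but it parametrizes candidates by $(I,v)$ with $\mathcal S(I,v)=\{t:I\subset A_t,\ a_{t,i}\ge v_i\}$ rather than by $(\mathcal S,I)$ with minimal thresholds; the two coincide at a maximizer. It then reads off \eqref{eq:spread-strong} from the same comparison with the enlarged constraint on $I\cup J$.

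One point to tidy: restrict the candidates to nonempty $\mathcal S$ from the outset, as the paper does. Deducing nonemptiness from the cardinality estimate is circular, because that estimate already uses $|I|\le m$ and $\sum_I w_i\le u$.
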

\begin{proof}
For $I\subset[d]$ and $v\in\R_+^I$, set
\[
 \mathcal S(I,v)=\{t:I\subset A_t,\ a_{t,i}\ge v_i\ (i\in I)\}.
\]
Among nonempty such sets, maximize
\[
 \Phi(I,v)=\log|\mathcal S(I,v)|
                +\tau_1|I|+\lambda_1\sum_{i\in I}v_i,
 \qquad \tau_1=\frac{p_1}{8m},\quad\lambda_1=\frac{p_1}{8u}.
\]
A maximum exists: each threshold can be raised to one of the
finitely many witness values without changing its nonempty selected
set. For nonempty $\mathcal S(I,v)$ one has $|I|\le m$ and
$\sum v_i\le u$. The empty core gives $\Phi=p_1$; thus a maximizer
satisfies $\log|\mathcal S|\ge3p_1/4$.

For an additional constraint $(J,w)$ outside $I$, maximality gives
\[
 \log|\mathcal S(I\cup J,v\cup w)|
       +\tau_1|J|+\lambda_1\sum_{j\in J}w_j
 \le\log|\mathcal S(I,v)|.
\]
If the set on the left is empty the desired estimate is automatic;
otherwise exponentiation proves \eqref{eq:spread-strong}.
Finally, common coordinates belong to both supports, so deleting
them changes none of the quantities $a_t(A_t\setminus A_s)$.
\end{proof}

Let $p=\log|\mathcal S|$. Since $p\le p_1$, the weaker parameters
\begin{equation}\label{eq:spread-parameters}
 \tau=\frac{p}{8m},\qquad \lambda=\frac{p}{8u},
 \qquad q_{\rm sp}=e^{-\tau}
\end{equation}
may replace $\tau_1,\lambda_1$. In particular
\begin{equation}\label{eq:S}
 \mu\{a_{t,i}>v_i\text{ for every }i\in J\}
 \le q_{\rm sp}^{|J|}e^{-\lambda\sum_{i\in J}v_i},
 \qquad v_i\ge0.
\end{equation}
Strict thresholds in \eqref{eq:S} exclude zero witness coordinates.
On a subfamily of relative size at least $\alpha$, the same
inequality holds with an additional factor $\alpha^{-1}$.

\begin{lemma}[Deleting a common core]\label{lem:projection}
For $Y=|X|$ with unconditional $X$, deleting coordinates common to
all supports can only decrease
\[
 \E_Y\E_\varepsilon
       \max_t\left|\sum_{i\in A_t}\varepsilon_iY_i\right|.
\]
Projection also preserves log-concavity, and a joint-tail witness
remains feasible after coordinates are deleted.
\end{lemma}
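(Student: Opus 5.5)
The plan is to prove the three assertions of \Cref{lem:projection} separately. The main one is the monotonicity of the expected maximum, and it is a Jensen argument in the signs of the core. Write $I$ for the deleted coordinates, so that $I\subset A_t$ for every $t$, and put $B_t=A_t\setminus I$. By \Cref{lem:signs} we may realize $X=(\varepsilon_iY_i)$ with signs independent of $Y$. Split the signs into $\varepsilon_I=(\varepsilon_i)_{i\in I}$ and $\varepsilon_{I^c}$. Fix $Y$ and $\varepsilon_{I^c}$, and set
\[
 Z=\sum_{i\in I}\varepsilon_iY_i,\qquad R_t=\sum_{i\in B_t}\varepsilon_iY_i .
\]
Then $Z$ is the same for every label and is a symmetric random variable in $\varepsilon_I$. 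The conditional expectation of $\max_t|Z+R_t|$ is, by symmetry of $Z$, equal to
\[
 \E_{\varepsilon_I}\tfrac12\bigl(\max_t|R_t+Z|+\max_t|R_t-Z|\bigr).
\]
For a fixed maximizing index $t^*$ of $|R_t|$, each of these two maxima is at least the corresponding term with $t=t^*$. The triangle inequality then gives
\[
 \tfrac12\bigl(|R_{t^*}+Z|+|R_{t^*}-Z|\bigr)\ge|R_{t^*}|=\max_t|R_t| .
\]
This is the Jensen step: $x\mapsto\max_t|R_t+x|$ is convex, so its average over a centered $Z$ is at least its value at $0$. Integrating over $\varepsilon_{I^c}$ and $Y$ shows that the quantity with the core is at least the quantity with the core deleted.

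The second assertion, preservation of log-concavity, is immediate. The coordinate projection onto $I^c$ is a linear map, and linear images of log-concave measures are log-concave. Unconditionality also survives, as recorded in \Cref{lem:signs}. The magnitude vector of the projection is $(Y_i)_{i\in I^c}$, so the objects of \Cref{sec:common} are defined for it in the same way.

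For the third assertion, a witness $a$ is feasible if $Q_Y(a)\ge e^{-q}$. After deletion we use the restricted vector $a|_{I^c}$. The event $\{Y_i\ge a_i,\ i\in I^c\}$ contains $\{Y_i\ge a_i \text{ for all } i\}$, so
\[
 Q_{Y_{I^c}}(a|_{I^c})\ge Q_Y(a)\ge e^{-q}.
\]
Equivalently, setting the coordinates of $a$ in $I$ to zero uses the monotonicity of $Q_Y$, which follows from downward closure in \Cref{lem:tails}. The mass can only decrease, so the restricted witness lies in the corresponding body $K_{A_t\setminus I}(q,u)$.

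I expect no real obstacle here. The only point needing care is that the sign averaging must be carried out conditionally on $Y$ and on $\varepsilon_{I^c}$, and this is legitimate because in the representation of \Cref{lem:signs} the signs are independent of each other and of $Y$.
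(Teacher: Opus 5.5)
Your proof is correct and follows the paper's argument. You condition on $Y$ and $\varepsilon_{I^c}$ and average out the common core term by convexity; you use symmetry of that term plus the triangle inequality, which is the paper's mean-zero Jensen step. For the other two claims you note that dropping constraints enlarges the joint-tail event and that projection is a linear image. One cosmetic point: it is the trivial monotonicity of $Q_Y$ that yields downward closure of the tail bodies, not the other way round.
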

\begin{proof}
Condition on $Y$ and the signs outside the common core $I$.
The expression inside the maximum is
$U+V_t$, where $U=\sum_{i\in I}\varepsilon_iY_i$ is the same for
all $t$ and has conditional mean zero. Convexity gives
$\E_{\varepsilon_I}\max_t|U+V_t|\ge\max_t|V_t|$.
For joint tails, deleting constraints enlarges the event. The
remaining assertion follows from \Cref{lem:signs}.
\end{proof}

\begin{lemma}[A fixed-support overlap estimate]\label{lem:overlap}
Under \eqref{eq:S}, for any $D\subset[d]$ with $|D|\le m$,
\begin{equation}\label{eq:overlap-mgf}
 \E_\mu e^{(\lambda/2)a_t(D)}
 \le(1+q_{\rm sp})^{|D|}\le e^{m q_{\rm sp}}.
\end{equation}
Consequently, for $\gamma>0$,
\begin{equation}\label{eq:overlap-tail}
 \mu\{a_t(D)\ge\gamma u\}
 \le\delta,\qquad
 \delta=\exp\{m q_{\rm sp}-\gamma p/16\}.
\end{equation}
\end{lemma}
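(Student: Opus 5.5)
The plan is to expand the exponential coordinatewise and use \eqref{eq:S} on each term. Write $a_t(D)=\sum_{i\in D}a_{t,i}$. Since $a_{t,i}\ge0$,
\[
 e^{(\lambda/2)a_t(D)}=\prod_{i\in D}\bigl(1+(e^{(\lambda/2)a_{t,i}}-1)\bigr)
 =\sum_{J\subset D}\prod_{i\in J}\bigl(e^{(\lambda/2)a_{t,i}}-1\bigr).
\]
Each factor $e^{(\lambda/2)a_{t,i}}-1$ vanishes when $a_{t,i}=0$. This is where the strict thresholds of \eqref{eq:S} become useful. The factor can be written as a layer-cake integral,
\[
 e^{(\lambda/2)x}-1=\int_0^\infty \tfrac{\lambda}{2}e^{(\lambda/2)v}\one_{\{x>v\}}\,dv .
\]
Using this, each product over $J$ becomes an integral over $v\in\R_+^J$ of $\prod_{i\in J}\tfrac{\lambda}{2}e^{(\lambda/2)v_i}\one_{\{a_{t,i}>v_i\}}$.

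Next take $\E_\mu$ and use Fubini. The joint event $\{a_{t,i}>v_i,\ i\in J\}$ has $\mu$-probability at most $q_{\rm sp}^{|J|}e^{-\lambda\sum_{i\in J}v_i}$ by \eqref{eq:S}. The integral then factorizes into one-dimensional pieces, each equal to
\[
 q_{\rm sp}\int_0^\infty\tfrac{\lambda}{2}e^{-(\lambda/2)v}\,dv=q_{\rm sp}.
\]
Summing over $J\subset D$ gives $\sum_{J}q_{\rm sp}^{|J|}=(1+q_{\rm sp})^{|D|}$. The second inequality in \eqref{eq:overlap-mgf} follows from $1+x\le e^x$ and $|D|\le m$. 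The one point needing care is that \eqref{eq:S} is only stated for $J\subset I^c$, that is, after deleting the core. So $D$ should be read in the projected coordinates, where the coordinates of $I$ are removed and $a_{t,i}=0$ there. Coordinates of $D$ lying in $I$ therefore contribute factor $1$, and the bound only improves.

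For the tail, apply Markov's inequality:
\[
 \mu\{a_t(D)\ge\gamma u\}\le e^{mq_{\rm sp}}e^{-(\lambda/2)\gamma u}.
\]
By \eqref{eq:spread-parameters}, $\lambda u/2=p/16$, which gives exactly $\delta$. I expect no real obstacle here. The only delicate step is the layer-cake/Fubini bookkeeping, namely using strict rather than non-strict inequalities so that \eqref{eq:S} applies verbatim.
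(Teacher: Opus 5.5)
Your proof is correct and is essentially the paper's argument: expand $\prod_{i\in D}\bigl(1+(e^{\lambda a_{t,i}/2}-1)\bigr)$, bound each $J$-term by $q_{\rm sp}^{|J|}$ using the layer-cake representation, Tonelli and \eqref{eq:S}, sum over $J\subset D$, and finish with exponential Markov and $\lambda u/2=p/16$. Your remarks on strict thresholds and on reading $D$ in the projected coordinates are consistent with how the paper sets up \eqref{eq:S} after the core is deleted.
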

\begin{proof}
Expand the product
$\prod_{i\in D}(1+(e^{\lambda a_{t,i}/2}-1))$.
For each $J\subset D$, Tonelli and \eqref{eq:S} bound the
expectation of the corresponding term by
\[
 \int_{\R_+^J}\prod_{i\in J}
       \left(\frac{\lambda}{2}e^{\lambda v_i/2}\right)
       q_{\rm sp}^{|J|}e^{-\lambda\sum v_i}\,dv
 =q_{\rm sp}^{|J|}.
\]
Summing proves \eqref{eq:overlap-mgf}; exponential Markov and
$\lambda u/2=p/16$ give \eqref{eq:overlap-tail}.
\end{proof}

\section{Clipped transport and bounded-coordinate concentration}
\label{sec:transport}

This section supplies the concentration estimate used to turn
rare feasible witness events into a positive mean gain.
The functions to be concentrated become Lipschitz after replacing
each magnitude $y_i$ by $1-e^{-a_i y_i}$, which takes values in
$[0,1]$. We prove a Gaussian exponential-moment bound for precisely
these functions.

The route has three parts. First, clipping a one-dimensional
transport displacement makes its squared size controllable by the
convex cost $\Delta(v)=v-1-\log v$. Second, conditional sign
symmetry lets us sum that inequality along a triangular transport.
Finally, applying the resulting entropy estimate to an exponential
tilt gives concentration at every value of the exponential parameter.
Only the source is required to be log-concave.

\subsection{A one-dimensional clipping inequality}

Put
\[
 \Delta(v)=v-1-\log v\quad(v>0),\qquad
 c_R(x)=\min(x/R,1)\quad(x\ge0,\ R>0).
\]
Set $\Delta(0)=+\infty$.

\begin{lemma}[Clipping]\label{lem:clipping}
There is a universal $C_{\rm clip}<\infty$ such that, for every
nonincreasing probability density $h$ on $\R_+$ and every
nondecreasing locally absolutely continuous map
$T:\R_+\to\R_+$ with $T(0)=0$,
\begin{equation}\label{eq:clipping}
 \int_0^\infty |c_R(T(x))-c_R(x)|^2h(x)\,dx
 \le C_{\rm clip}\int_0^\infty\Delta(T'(x))h(x)\,dx.
\end{equation}
\end{lemma}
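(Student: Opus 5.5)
The plan is to make both sides of \eqref{eq:clipping} simpler by reducing to uniform densities and rescaling, and then to compare the clipped displacement at each point $x$ with the transport cost on $[0,x]$.

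\emph{Step 1: reduce to uniform densities.} Both sides of \eqref{eq:clipping} are linear in $h$. A nonincreasing probability density on $\R_+$ is a mixture of the uniform densities $s^{-1}\one_{[0,s]}$: write $h(x)=\int_{(x,\infty)}d\nu(s)$ with $\nu=-dh$, and the weights $s\,d\nu(s)$ form a probability measure. So it suffices to prove, with a universal constant and for every $s>0$,
\[
 \int_0^s|c_R(T(x))-c_R(x)|^2\,dx\le C_{\rm clip}\int_0^s\Delta(T'(x))\,dx .
\]
The substitution $x\mapsto sx$, $T\mapsto s^{-1}T(s\,\cdot)$, $R\mapsto R/s$ leaves $T'$ and $\Delta(T')$ unchanged. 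We may therefore take $s=1$ and keep $R>0$ arbitrary.

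\emph{Step 2: pointwise bound via Jensen.} Since $\Delta$ is convex and $T(0)=0$, Jensen on $[0,x]$ gives
\[
 \int_0^x\Delta(T')\ge x\,\Delta\bigl(T(x)/x\bigr).
\]
I write $v(x)=T(x)/x$ and $\varphi(x)=|c_R(T(x))-c_R(x)|$, and would split into three cases.
\begin{enumerate}[leftmargin=*]
\item If $x\ge R$ and $T(x)\ge R$, then $\varphi(x)=0$.
\item If $x\ge R>T(x)$, then $\varphi\le1-T(x)/R\le 1-v(x)$. Since $\Delta(v)\ge c(1-v)^2$ for $v\le1$, this gives $\varphi^2\le C\Delta(v(x))$.
\item If $x<R$, then $\varphi\le\min\{1,\,x|v(x)-1|/R\}$. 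Using $\Delta(v)\ge c\min\{(v-1)^2,|v-1|\}$, this gives
\[
 \varphi^2\le C\min\bigl\{\Delta(v(x)),\ (x/R)\Delta(v(x))\bigr\}.
\]
For the linear regime $|v-1|\ge1$, use $\min\{1,y^2\}\le y$ with $y=x|v-1|/R$.
\end{enumerate}
In every case $\varphi(x)^2\le C\Delta(v(x))$. Moreover, when $R\ge1$ there is the additional factor $\min\{1,x/R\}$ in the part where $|v-1|$ is large.

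\emph{Step 3: integrate, which is the main obstacle.} Integrating the bound from Step 2 naively, with $\Delta(v(x))\le x^{-1}\int_0^x\Delta(T')$, produces a logarithmic divergence: Hardy's inequality fails for the linear growth of $\Delta$ at infinity. I would split $T'$ into its quadratic regime $\{T'\le2\}$ and its linear regime $\{T'>2\}$.

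\emph{Quadratic regime.} Here $|T(x)-x|$ is controlled through $\int_0^x|T'-1|\one_{\{T'\le2\}}$. The classical $L^2$ Hardy inequality $\int_0^1(F/x)^2\le4\int_0^1f^2$ bounds this part by $\int_0^1(T'-1)^2\one_{\{T'\le2\}}\lesssim\int_0^1\Delta(T')$.

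\emph{Linear regime, small displacement.} This regime only increases $T$, so it matters only through cases 1 and 3. If $R\ge1$, the extra factor $x/R\le x$ turns the Hardy kernel $x^{-1}\one_{\{y<x\}}$ into a bounded one. Then Tonelli gives
\[
 \int_0^1\int_0^x|T'-1|\one_{\{T'>2\}}\,dy\,dx\le\int_0^1|T'-1|\one_{\{T'>2\}}\lesssim\int_0^1\Delta(T').
\]

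\emph{Linear regime, large displacement.} If $R<1$, or more generally once $T(x)$ exceeds $R$, we have $\varphi\le1$. The set of $x<R$ where the linear-regime displacement is at least of order $R$ has measure at most $R$. On that set I would bound $\varphi^2\le1$ and use
\[
 \int_0^R|T'-1|\one_{\{T'>2\}}\gtrsim R \quad\text{(or, if this fails, the displacement stays below order } R\text{)}.
\]
Making this last bookkeeping uniform in $R$ is where I expect the real work to lie. If the splitting proves awkward, the fallback is a direct layer-cake argument over the level sets $\{x:\varphi(x)>\lambda\}$, estimating each with Jensen on $[0,x]$.
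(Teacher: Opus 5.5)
Your argument is correct, but the obstacle you flag at the end is not real, and your route differs from the paper's.

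\emph{Closing your argument.} Your case-3 bound already settles the region $x<R$ for every $R$. Combined with Jensen, $x\,\Delta(v(x))\le\int_0^x\Delta(T')$, it gives $\varphi(x)^2\le (C/R)\int_0^1\Delta(T')$ pointwise. Integrating over an interval of length $\min(R,1)$ then gives $C\int_0^1\Delta(T')$, with no splitting into regimes. Equivalently, in your splitting put $F_\ell(x)=\int_0^x (T'-1)\one_{\{T'>2\}}$. Then $\min\{1,F_\ell(x)/R\}^2\le F_\ell(x)/R$ integrates over $[0,\min(R,1)]$ to at most $F_\ell(1)$, so $R<1$ is no harder than $R\ge1$.

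The only region needing more than a pointwise bound is case 2, $x\ge R>T(x)$. There $\varphi\le x^{-1}\int_0^x(1-T')_+$, which involves only compression. Since $\Delta(v)\ge(1-v)^2/2$ on $(0,1]$, your $L^2$ Hardy step finishes the proof.

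\emph{Comparison with the paper.} The paper avoids Hardy altogether. It normalizes $R=1$ rather than $s=1$. On $[0,\min(s,2)]$ it combines the Cauchy--Schwarz displacement bound $|T(x)-x|\le C(\sqrt{xE_s}+E_s)$ with $\varphi\le1$ to get $\varphi^2\le CE_s$ pointwise, which integrates to $CE_s$ over a window of bounded length. Beyond $x=2$, a nonzero clipped difference forces $T(x)<1<x/2$. By monotonicity these $x$ form an initial interval ending at some $b$, and one Jensen on $[0,b]$ gives $E_s\ge b\,\Delta(1/2)$. So the length of the exceptional set is paid for directly by the cost.

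The paper's version is shorter and its constant comes from $\Delta(1/2)$ alone. Yours makes explicit that, away from the origin, only the quadratic (compressive) behaviour of $\Delta$ matters, at the price of invoking Hardy's inequality and a regime decomposition.
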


\begin{proof}
\emph{Step 1: an initial interval.}
We first prove the unweighted estimate on every interval $[0,s]$.
Scaling $x$ and $T$ by $R$ reduces to $R=1$. Write
$E_s=\int_0^s\Delta(T'(x))\,dx$ and assume $E_s<\infty$.
The elementary inequality
\begin{equation}\label{eq:delta-elementary}
 |v-1|\le C\{\sqrt{\Delta(v)}+\Delta(v)\},\qquad v>0,
\end{equation}
follows by considering $v\in[1/2,2]$ and its complement.
Absolute continuity, $T(0)=0$, and Cauchy--Schwarz imply
\[
 |T(x)-x|\le C\{\sqrt{xE_s}+E_s\},\qquad 0\le x\le s.
\]
For $x\le\min(s,2)$, the clipping difference is bounded by both
$1$ and $|T(x)-x|$. If $E_s\le1$, its square is at most $CE_s$;
if $E_s>1$, the same conclusion follows from the bound by $1$.
Integration over this part of the interval contributes at most
$CE_s$.

\emph{Step 2: the remaining part of the interval.}
For $x>2$, a nonzero clipping difference requires $T(x)<1$.
If this set is nonempty, let
$b=\sup\{x\in[2,s]:T(x)<1\}$. When $b>2$, continuity gives
$T(b)\le1$. Jensen's inequality gives
\[
 E_s\ge\int_0^b\Delta(T'(x))\,dx
 \ge b\,\Delta\!\left(\frac{T(b)}b\right)
 \ge b\,\Delta(1/2).
\]
Here $\Delta$ is decreasing on $(0,1]$, and zero average derivative
would give infinite cost. The remaining clipping integral is at
most the length of the exceptional interval, hence at most $b$.
This proves
\begin{equation}\label{eq:interval-clipping}
 \int_0^s|c_R(T(x))-c_R(x)|^2\,dx
 \le C_{\rm clip}\int_0^s\Delta(T'(x))\,dx
\end{equation}
for every $s,R>0$.

\emph{Step 3: pass to a decreasing density.}
Every nonincreasing integrable density is a mixture of
uniform densities on initial intervals. More precisely, using a
right-continuous version and Stieltjes differentiation, the measure
$\omega(ds)=-s\,dh(s)$ has total mass one and
\[
 h(x)=\int_{[x,\infty)}\frac1s\,\omega(ds)
 \quad\text{for almost every }x>0.
\]
The boundary terms vanish because $s h(s)\to0$ at both endpoints;
alternatively the identity and normalization follow directly by
Tonelli. Integrate \eqref{eq:interval-clipping}, divided by $s$,
against $\omega$. Nonnegativity justifies all interchanges, including
when the right-hand side is infinite.
\end{proof}

\begin{lemma}[Exponential saturation]\label{lem:saturation}
For $a>0$, let $\psi_a(x)=1-e^{-ax}$ on $\R_+$. Under the hypotheses
of \Cref{lem:clipping},
\begin{equation}\label{eq:saturation}
 \int|\psi_a(T(x))-\psi_a(x)|^2h(x)\,dx
 \le C_{\rm clip}\int\Delta(T'(x))h(x)\,dx .
\end{equation}
\end{lemma}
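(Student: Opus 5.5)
The plan is to write $\psi_a$ as a probability mixture of the clipping functions $c_R$. Then \eqref{eq:saturation} follows from \Cref{lem:clipping}, applied for each $R$, together with Jensen's inequality. Since the constant $C_{\rm clip}$ in \eqref{eq:clipping} does not depend on $R$, the mixture costs nothing in the constant.

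\emph{Step 1: the mixture representation.} For a probability measure $\nu$ on $(0,\infty)$ put $f(x)=\int c_R(x)\,\nu(dR)$. Then $f(0)=0$ and, for $x>0$,
\[
 f'(x)=\int_{(x,\infty)}\frac{\nu(dR)}{R}.
\]
We want $f'(x)=ae^{-ax}$, which forces $\nu(dR)/R=a^2e^{-aR}\,dR$. So we take
\[
 \nu(dR)=a^2Re^{-aR}\,dR,
\]
the Gamma$(2,a)$ law. Its total mass is $\int_0^\infty a^2Re^{-aR}\,dR=1$, so it is a probability measure. Integrating $f'$ from $0$ gives $f=\psi_a$ on $\R_+$. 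I would verify this identity directly, by computing $\int\min(x/R,1)\,a^2Re^{-aR}\,dR$ in closed form, rather than relying on the differentiation alone.

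\emph{Step 2: Jensen.} For fixed $x\ge0$, linearity of the mixture gives
\[
 \psi_a(T(x))-\psi_a(x)=\int\bigl(c_R(T(x))-c_R(x)\bigr)\,\nu(dR).
\]
Since $\nu$ is a probability measure, convexity of the square yields
\[
 |\psi_a(T(x))-\psi_a(x)|^2\le\int|c_R(T(x))-c_R(x)|^2\,\nu(dR).
\]

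\emph{Step 3: integrate and apply \Cref{lem:clipping}.} Multiply by $h(x)$ and integrate over $x$. All integrands are nonnegative and jointly measurable, so Tonelli allows the order of integration to be exchanged. This gives
\[
 \int|\psi_a(T)-\psi_a|^2h
 \le\int\!\!\int|c_R(T(x))-c_R(x)|^2h(x)\,dx\,\nu(dR)
 \le\int C_{\rm clip}\int\Delta(T'(x))h(x)\,dx\,\nu(dR).
\]
The inner integral on the right does not depend on $R$, and $\nu$ has mass one, so the right-hand side equals $C_{\rm clip}\int\Delta(T')h$. This is \eqref{eq:saturation}. If that integral is infinite there is nothing to prove.

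There is no real obstacle here. The only point needing care is Step 1, namely finding a mixing measure that is nonnegative and has total mass exactly one. This works because $\psi_a'$ is nonincreasing with $\psi_a'(0)=a$ and $\psi_a(\infty)=1$, which are exactly the features of a $[0,1]$-valued concave function that is a probability mixture of the $c_R$.
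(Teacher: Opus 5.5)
Your proposal is correct and matches the paper's proof: it uses the same Gamma$(2,a)$ mixture $\psi_a(x)=\int_0^\infty c_R(x)\,a^2Re^{-aR}\,dR$, Jensen on the squared difference, and then \Cref{lem:clipping} (whose constant does not depend on $R$) together with Tonelli. The closed-form check you plan for the mixture identity is what the paper calls ``direct integration,'' and it gives $1-(1+ax)e^{-ax}+axe^{-ax}=1-e^{-ax}$.
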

\begin{proof}
Direct integration gives the probability-mixture representation
\[
 \psi_a(x)=\int_0^\infty c_R(x)\,a^2R e^{-aR}\,dR,
 \qquad \int_0^\infty a^2R e^{-aR}\,dR=1.
\]
Apply Jensen to the square of the difference, then use
\Cref{lem:clipping} and Tonelli.
\end{proof}

Absolute continuity in these lemmas is substantive. A monotone map
with a jump at the origin is not covered by the derivative cost.
The approximation in the next subsection ensures that the
conditional maps used in the proof have no such jump.

\subsection{Triangular transport with a symmetric target}

For $a_i>0$, define
\[
 \Psi(y)=(1-e^{-a_i y_i})_{i=1}^d,\qquad y\in\R_+^d.
\]
We first work with a smooth, strictly positive unconditional
log-concave source density $f=e^{-V}$ on $\R^d$. It is enough to
consider sources obtained by convolving a log-concave law with a
nondegenerate Gaussian; the final approximation will use exactly
these sources.

\begin{lemma}[Entropy cost for transformed magnitudes]
\label{lem:transport-cost}
Let $\src$ be such a source law and let $\bar\rho$ be unconditional.
If $\bar\rho$ has a smooth density, is supported on a box centered
at the origin, and has a positive density in the interior of that
box, there is a coupling $(X,\widetilde X)$ of $\src,\bar\rho$ such that
\begin{equation}\label{eq:transport-cost}
 \E\norm{\Psi(|\widetilde X|)-\Psi(|X|)}_2^2
 \le C_{\rm clip}\KL{\bar\rho}{\src}.
\end{equation}
The same conclusion holds when
$d\bar\rho/d\src$ is bounded, continuous, strictly positive, and
unconditional.
\end{lemma}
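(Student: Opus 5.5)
The plan is to take the Knothe--Rosenblatt (triangular) map $T$ pushing $\src$ forward to $\bar\rho$, and to couple $\widetilde X=T(X)$. Then I would run the Cordero-Erausquin--Gozlan entropy calculation to bound $\KL{\bar\rho}{\src}$ from below by a sum of one-dimensional costs $\sum_i\E\,\Delta(\partial_iT_i(X))$. Finally, each coordinate is handled by the saturation inequality \Cref{lem:saturation}, applied conditionally on the preceding coordinates.

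\emph{Step 1 (construction and symmetry).} Since $\src$ has a smooth positive density and $\bar\rho$ has a smooth density that is positive in the interior of its box, the conditional distribution functions are smooth and strictly increasing where relevant. Hence $T_i(x_1,\dots,x_i)$ is smooth and strictly increasing in $x_i$, with $\partial_iT_i>0$ on the interior. By unconditionality of both laws, the conditional law of the $i$-th coordinate given $x_{<i}$ is symmetric and depends only on $|x_{<i}|$. Matching distribution functions therefore gives that $T_i$ is odd in $x_i$ and even in each earlier coordinate. Consequently $|\widetilde X_i|=T_i(x_{<i},|X_i|)$, and for fixed $x_{<i}$ the map $y\mapsto T_i(x_{<i},y)$ on $\R_+$ is nondecreasing, locally absolutely continuous, and satisfies $T_i(x_{<i},0)=0$. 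This is exactly the setting of \Cref{lem:clipping}.

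\emph{Step 2 (entropy identity).} From the change of variables $g(Tx)\det DT(x)=f(x)$, with $g$ the density of $\bar\rho$ and $\det DT=\prod_i\partial_iT_i$, we get
\[
 \KL{\bar\rho}{\src}=\int\bigl[V(Tx)-V(x)-\ip{\nabla V(x)}{Tx-x}\bigr]\,d\src
 +\int\Bigl[\ip{\nabla V}{T-x}-\sum_i\log\partial_iT_i\Bigr]\,d\src .
\]
The first bracket is nonnegative by convexity of $V$. Integrating by parts, $\int\ip{\nabla V}{T-x}e^{-V}=\int\operatorname{div}(T-x)e^{-V}=\sum_i\int(\partial_iT_i-1)\,d\src$. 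This yields
\[
 \KL{\bar\rho}{\src}\ge\sum_i\int\Delta(\partial_iT_i)\,d\src .
\]

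\emph{Step 3 (coordinatewise clipping).} Fix $i$ and condition on $X_{<i}$. The conditional density of $X_i$ is log-concave and symmetric, since conditionals of log-concave laws are log-concave. Hence the conditional density $h$ of $|X_i|$ on $\R_+$ is nonincreasing. \Cref{lem:saturation} with $a=a_i$ and the map from Step 1 gives
\[
 \E\bigl[|\psi_{a_i}(|\widetilde X_i|)-\psi_{a_i}(|X_i|)|^2\,\big|\,X_{<i}\bigr]\le C_{\rm clip}\,\E\bigl[\Delta(\partial_iT_i)\,\big|\,X_{<i}\bigr],
\]
where oddness lets us restrict to $x_i\ge0$. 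Taking expectations, summing over $i$, and using Step 2 proves \eqref{eq:transport-cost}.

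For the second assertion, $\bar\rho$ has a bounded, continuous, positive, unconditional density ratio with respect to $\src$. I would approximate $\bar\rho$ by unconditional laws of the first type: truncate to large centered boxes, mollify symmetrically, and renormalize, so that the relative entropies converge. The left side of \eqref{eq:transport-cost} depends only on the coupling through bounded continuous functions. Tightness of the couplings, weak limits, and lower semicontinuity then pass the inequality to the limit.

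The main obstacle I expect is justifying the integration by parts in Step 2, that is, showing the boundary terms vanish and all integrals are finite. $T$ is bounded because it maps into the box, but $T-x$ grows linearly. Since $e^{-V}$ has exponential tails and $\nabla V$ has at most polynomial growth for Gaussian convolutions, I would first integrate by parts on large balls and let the radius tend to infinity. The behaviour of $\partial_iT_i$ near the boundary of the box also needs care, and I would use Fatou where only an inequality is needed. If this is delicate, I would first prove the result for targets whose density is bounded below on the box and then relax that assumption by approximation.
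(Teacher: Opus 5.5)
Your proposal is correct and follows the paper's own argument step for step. Both use the Knothe map from $\src$ to $\bar\rho$ and the above-tangent entropy computation (convexity of $V$ plus integration by parts in $x_i$) to get $\KL{\bar\rho}{\src}\ge\sum_i\E\,\Delta(\partial_iT_i(X))$; both then apply \Cref{lem:saturation} conditionally, using that each conditional map is odd in $x_i$ and that the conditional magnitude density is nonincreasing; and both treat the bounded-density case by cutoff and even smoothing, with relative entropies converging and the cost bounded and continuous (so lower semicontinuity is needed only on the cost side, which is how you use it).
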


\begin{proof}
Use the increasing triangular (Knothe) transport $T$ from $\src$ to
$\bar\rho$: $T_i$ depends on $(x_1,\ldots,x_i)$ and, for each
preceding prefix, is the increasing rearrangement of the conditional
law of $X_i$ onto the corresponding target conditional law.
For smooth positive densities on the indicated interiors these
one-dimensional maps are locally absolutely continuous and strictly
increasing.

\emph{Step 1: the entropy controls diagonal derivatives.}
The above-tangent entropy calculation gives
\begin{equation}\label{eq:above-tangent}
 \KL{\bar\rho}{\src}\ge
 \sum_{i=1}^d\E_{\src}\Delta(\partial_iT_i(X)).
\end{equation}
We recall the calculation; its standard transport form also appears
in \cite[Lemma 3.5]{CEG}. Change of variables and triangularity give
\[
 \KL{\bar\rho}{\src}
 =\E_{\src}\!\left[V(TX)-V(X)-\sum_i\log\partial_iT_i(X)\right].
\]
Convexity of $V$ bounds its increment below by
$\sum_i\partial_iV(X)(T_i(X)-X_i)$. Integration by parts gives
\[
 \E_{\src}[\partial_iV(X)(T_i(X)-X_i)]
 =\E_{\src}[\partial_iT_i(X)-1],
\]
and substitution proves \eqref{eq:above-tangent}.
These integrations may first be made with compact cutoffs.
The target coordinates are bounded. For a Gaussian-convolved
source, the score is a conditional expectation of a Gaussian
increment divided by its variance; its first moments, and the
moments needed after multiplication by $|X_i|$, are finite.
Thus the cutoff terms vanish. Nonnegativity of $\partial_iT_i$
also allows truncation in the derivative integrals.

\emph{Step 2: apply the one-dimensional estimate conditionally.}
Fix a preceding source prefix $x_{<i}$. The source conditional
density of $X_i$ is even and log-concave: integrate out later
coordinates, use preservation of log-concavity by marginals, and
then restrict to the fixed prefix. Its magnitude density $h_i$
on $\R_+$ is therefore nonincreasing. The corresponding target
conditional density is even, by unconditionality of $\bar\rho$.
Consequently $T_i(x_{<i},\cdot)$ is odd, fixes zero, and its
restriction to $\R_+$ satisfies the hypotheses of
\Cref{lem:saturation}. Conditionally,
\[
 \E\!\left[
  |\psi_{a_i}(|T_i(X)|)-\psi_{a_i}(|X_i|)|^2\mid X_{<i}
 \right]
 \le C_{\rm clip}
 \E[\Delta(\partial_iT_i(X))\mid X_{<i}].
\]
Sum and integrate, using \eqref{eq:above-tangent}, to obtain
\eqref{eq:transport-cost}.

\emph{Step 3: approximate a bounded density change.}
For the final assertion write $r=d\bar\rho/d\src$. Multiply $r f$
by smooth unconditional cutoffs supported on growing centered
boxes, equal to one on smaller boxes, and positive in the
interiors. Normalize. Smooth the bounded continuous factor $r$
by an even approximate identity on each box if needed.
The resulting target laws converge weakly to $\bar\rho$, with
relative entropies converging to $\int r\log r\,d\src$.
To see entropy convergence before smoothing, use boundedness of
$r$, dominated convergence, and boundedness of $z\log z$ on a
bounded interval including zero. On each fixed box, positive
continuous $r$ is bounded away from zero, so uniform smoothing
also preserves the entropy integral in the limit.
For each approximant we have a coupling with \eqref{eq:transport-cost}.
Tightness yields a limiting coupling. Its cost is the limit along
a subsequence because the function
$\norm{\Psi(|x'|)-\Psi(|x|)}_2^2$ is bounded and continuous.
This proves the assertion.
\end{proof}

\begin{theorem}[Concentration after saturation]\label{thm:saturated-mgf}
Let $X$ be any unconditional log-concave vector, let $a_i>0$, and
let $g:[0,1]^d\to\R$ be $L$-Lipschitz for Euclidean distance.
Then, for every $t\in\R$,
\begin{equation}\label{eq:saturated-mgf}
 \log\E\exp\!\left\{t\bigl(g(\Psi(|X|))
                   -\E g(\Psi(|X|))\bigr)\right\}
 \le C_T L^2t^2,\qquad C_T=C_{\rm clip}/4 .
\end{equation}
The constant is independent of the coordinate parameters $a_i$.
\end{theorem}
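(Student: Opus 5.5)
We plan to derive the bound from the transport–entropy inequality of \Cref{lem:transport-cost}, arguing as in the Bobkov--Götze/Marton route from a $T_2$-type inequality to Gaussian exponential moments. Put $F=g\circ\Psi\circ|\cdot|$, viewed as a function of the signed vector. It is bounded because $g$ is Lipschitz on the compact cube $[0,1]^d$, and it is unconditional, i.e.\ invariant under sign changes of coordinates. For a fixed $t\in\R$ we will work with the exponential tilt
\[
 d\bar\rho=\frac{e^{tF}}{\E e^{tF}}\,d\src .
\]
Its density $r=e^{tF}/\E e^{tF}$ is bounded, strictly positive, continuous and unconditional. This is exactly the situation covered by the last assertion of \Cref{lem:transport-cost}. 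We do not need a separate one-sided argument for $t<0$, since the density and all the properties just listed hold for every real $t$.

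The core computation goes as follows. Write $\Lambda(t)=\log\E e^{t(F-\E F)}$. A direct calculation gives
\[
 \KL{\bar\rho}{\src}=t\,\E_{\bar\rho}F-\log\E_{\src}e^{tF}
 =t\bigl(\E_{\bar\rho}F-\E_{\src}F\bigr)-\Lambda(t).
\]
Take the coupling $(X,\widetilde X)$ supplied by \Cref{lem:transport-cost}. Since $g$ is $L$-Lipschitz, Cauchy--Schwarz yields
\[
 \bigl|\E_{\bar\rho}F-\E_{\src}F\bigr|
 \le L\,\E\norm{\Psi(|\widetilde X|)-\Psi(|X|)}_2
 \le L\sqrt{C_{\rm clip}K},
 \qquad K=\KL{\bar\rho}{\src}.
\]
Substituting this into the entropy identity gives
\[
 \Lambda(t)\le |t|L\sqrt{C_{\rm clip}K}-K .
\]
Maximizing the right-hand side over $K\ge0$ then gives $\Lambda(t)\le C_{\rm clip}L^2t^2/4$, which is the constant $C_T=C_{\rm clip}/4$ in the statement. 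No coordinate parameter $a_i$ enters at any point, because \Cref{lem:saturation} was already uniform in $a$.

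The remaining task is approximation: \Cref{lem:transport-cost} was proved only for smooth, strictly positive sources obtained by convolving a log-concave law with a nondegenerate Gaussian. We will handle a general unconditional log-concave $X$ in three steps.
\begin{enumerate}
\item Replace $X$ by $X+\sigma G$, where $G$ is a standard Gaussian vector independent of $X$. This vector is unconditional, since both summands are sign invariant and independent, and it is log-concave, because convolution preserves log-concavity.
\item Its density is smooth and strictly positive, so the argument above applies and gives the bound with the same constant for every $\sigma>0$.
\item Let $\sigma\to0$. The function $x\mapsto g(\Psi(|x|))$ is bounded and continuous, and $X+\sigma G\to X$ in law. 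Hence $\E e^{t(F-\E F)}$ converges for each fixed $t$, and the inequality passes to the limit. Degenerate laws supported on subspaces need no separate treatment, since the Gaussian convolution handles them.
\end{enumerate}

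The step we expect to need the most care is verifying that the tilted law really fits the hypotheses of \Cref{lem:transport-cost}. Continuity of $r$ requires continuity of $g$ on the cube together with continuity of $x\mapsto\Psi(|x|)$, and both hold. Boundedness holds because $F$ is bounded. Unconditionality holds because $F$ depends only on $|x|$. Strict positivity is automatic for an exponential. Once these are checked, the remaining optimization over $K$ is elementary.
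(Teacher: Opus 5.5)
Your proposal is correct and matches the paper's proof essentially step for step. Both arguments use the exponential tilt with bounded, continuous, strictly positive, unconditional density, the entropy identity $\Lambda(t)=t(\E_{\bar\rho}F-\E_{\src}F)-K$, the coupling from \Cref{lem:transport-cost} with the Lipschitz bound and Cauchy--Schwarz, and maximization of the resulting quadratic in $\sqrt K$ for all $t\in\R$ at once. Both then finish with Gaussian smoothing $X+\sigma G$ and pass to the limit using bounded continuity of $F$.
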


\begin{proof}
First take a Gaussian-convolved source as above, and write
$F(x)=g(\Psi(|x|))$. This is bounded, continuous, and unconditional.
Its exponential tilt $\src_t$, defined by
$d\src_t/d\src=e^{tF}/\E_{\src}e^{tF}$, satisfies the last assertion of
\Cref{lem:transport-cost}. Write $D_t=\KL{\src_t}{\src}$. In the coupling
from that lemma, Lipschitz continuity and Cauchy--Schwarz give
\[
 |\E_{\src_t}F-\E_{\src}F|\le L\sqrt{C_{\rm clip}D_t}.
\]
The entropy identity therefore yields
\begin{align*}
 \log\E_{\src}e^{t(F-\E_{\src}F)}
 &=t(\E_{\src_t}F-\E_{\src}F)-D_t\\
 &\le |t|L\sqrt{C_{\rm clip}D_t}-D_t
 \le \frac{C_{\rm clip}}4L^2t^2.
\end{align*}
The last inequality maximizes a quadratic polynomial in $\sqrt{D_t}$.

For a general source take $X_s=X+\sqrt{s}\,G$ with independent
standard Gaussian $G$. The laws remain unconditional and
log-concave and have the required smooth positive densities.
As $s\downarrow0$, bounded continuity of $F$ gives convergence
of its mean and of every bounded exponential $e^{tF}$.
The constant in the inequality is independent of $s$, so passage
to the limit proves \eqref{eq:saturated-mgf}, also for singular
sources.
\end{proof}

\begin{remark}\label{rem:transport-scope}
This proof does not assert that $\src_t$ is log-concave. Its
unconditionality ensures that its conditional transport maps are
odd; convexity is used only for the potential of the source.
The conclusion concerns functions of \emph{magnitudes}. It does
not give the same Euclidean transport assertion for arbitrary
functions of a signed, coordinatewise saturated vector.
\end{remark}

\section{Entropy softening and the free-energy estimate}
\label{sec:softening}

Feasibility gives a lower bound on an exponential average of the
capped gains. The final signed argument, however, needs a gain that
is present on typical source realizations. These are different
requirements: for a positive partition function $Z$, a lower bound
on $\log\E Z$ alone does not give a lower bound on $\E\log Z$.

We address the difference through a Gibbs selector. A label
distribution $\pi$ earns its average capped gain and pays relative
entropy with respect to $\mu$. We soften its coordinate contributions
so that they have controlled derivatives, prove that the lost gain
is charged to the same entropy, and then apply the concentration
theorem from the previous section.

Let $\mu$ be a label law and suppose, more generally, that
\begin{equation}\label{eq:hereditary-spread}
 \mu\{a_{t,i}>v_i,\ i\in J\}
 \le \alpha^{-1}q_{\rm sp}^{|J|}
             e^{-\lambda\sum_{i\in J}v_i},
 \qquad 0<\alpha\le1,
\end{equation}
with $q_{\rm sp}=e^{-\tau}<1$. Each $a_t$ has support of size at
most $m$ and mass at most $u$. For the main proof $\mu$ is uniform,
$\alpha=1$, and the parameters are \eqref{eq:spread-parameters}.

Fix $0<\theta\le1$, put $B=\theta u$, and define the capped gain
\begin{equation}\label{eq:capped}
 C_t(y)=\sum_i\min(\theta a_{t,i},y_i),\qquad y\ge0.
\end{equation}
Thus $0\le C_t\le B$. Coordinate $i$ contributes up to its requested
amount $\theta a_{t,i}$; if the source magnitude is smaller, only
the available amount $y_i$ is counted. For a posterior label law
$\pi\ll\mu$ put
$Q_i^\pi(v)=\pi\{a_{t,i}>v\}$ and $K_\pi=\KL{\pi}{\mu}$.
Define its softened energy by
\begin{equation}\label{eq:soft-energy}
 \mathcal A_y(\pi)
 =\theta\sum_i\int_0^{y_i/\theta}
       \min\{Q_i^\pi(v),\sqrt{q_{\rm sp}}e^{-\lambda v/2}\}\,dv.
\end{equation}

\subsection{Charging the loss to entropy}

The envelope in \eqref{eq:soft-energy} limits the influence of a
coordinate on the selected gain. If a posterior puts more mass than
this envelope on many large witness coordinates, it must differ
substantially from the spread reference law. The next lemma makes
this cost quantitative.

\begin{lemma}[Entropy softening]\label{lem:softening}
Under \eqref{eq:hereditary-spread},
\begin{equation}\label{eq:soft-loss}
 0\le\E_\pi C_t(y)-\mathcal A_y(\pi)
 \le b(K_\pi+\log\alpha^{-1}),\qquad
 b=\frac{2\theta}{\lambda(1-\sqrt{q_{\rm sp}})}.
\end{equation}
\end{lemma}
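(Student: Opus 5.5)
First I would rewrite the capped gain in layer-cake form. For each coordinate, $\min(\theta a,y)=\theta\int_0^{y/\theta}\one\{a>v\}\,dv$ (substitute $w=\theta v$). Averaging over $\pi$ and summing gives
\[
 \E_\pi C_t(y)=\theta\sum_i\int_0^{y_i/\theta}Q_i^\pi(v)\,dv .
\]
Comparing this with \eqref{eq:soft-energy} gives the lower bound at once, since the integrand of $\mathcal A_y(\pi)$ is a pointwise minimum of $Q_i^\pi$ with another function. For the upper bound, drop the restriction $v\le y_i/\theta$, since the integrand is nonnegative. The loss is then at most
\[
 \theta\sum_i\int_0^\infty\bigl(Q_i^\pi(v)-g(v)\bigr)_+\,dv,\qquad g(v)=\sqrt{q_{\rm sp}}e^{-\lambda v/2},
\]
and this bound no longer depends on $y$.

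Next I would charge each excess to entropy through the Donsker--Varadhan inequality. For a fixed event $E$ on labels, $\pi(E)\le(K_\pi+\log2)/\log(1/\mu(E))$ is too weak to sum over infinitely many thresholds. So I would take a single exponential test function built from all excess events together. Define
\[
 F(t)=\sum_i\int_0^\infty \one\{a_{t,i}>v\}\,\one\{Q_i^\pi(v)>g(v)\}\,\kappa(v)\,dv
\]
with a weight $\kappa$ to be chosen. Gibbs duality gives $\E_\pi F\le K_\pi+\log\E_\mu e^{F}$.

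To estimate $\log\E_\mu e^F$, I would expand $e^F$ over subsets $J$ of coordinates, exactly as in \Cref{lem:overlap}. Each term involves $\mu\{a_{t,i}>v_i,\ i\in J\}$, which \eqref{eq:hereditary-spread} bounds by $\alpha^{-1}q_{\rm sp}^{|J|}e^{-\lambda\sum v_i}$. Since the prefactor $\alpha^{-1}$ appears only once, the result should be of the form $\log\E_\mu e^F\le\log\alpha^{-1}+O(1)$. With $\kappa\equiv\lambda/2$ the exponential integrals close up as in \Cref{lem:overlap}. The factor $(1+q_{\rm sp})^{m}$ there is an obstacle, though: the target bound has no $m$-dependence.

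The key point is that where $Q_i^\pi(v)>g(v)$, the excess $Q-g$ is at most $Q$, so it suffices to bound $\int Q_i^\pi\one\{Q_i^\pi>g\}$. On that set, $Q>\sqrt{q_{\rm sp}}e^{-\lambda v/2}$, which means $\pi$ charges an event whose spread-bounded $\mu$-mass is at most $q_{\rm sp}e^{-\lambda v}\le g(v)^2$. The likelihood ratio there is large, roughly at least $1/g$. I would therefore use the per-coordinate test $\log(Q/\mu\text{-bound})\ge\log(1/g)=\tfrac12\log q_{\rm sp}^{-1}+\lambda v/2$.

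Summing over $i$ and integrating in $v$ produces a geometric series in $\sqrt{q_{\rm sp}}$. I expect this series, together with a standard entropy-chain inequality (the entropy of a product of threshold events is superadditive after suitable conditioning), to yield the constant $2/(\lambda(1-\sqrt{q_{\rm sp}}))$.

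The main obstacle is making this summation over coordinates and thresholds cost only one copy of $K_\pi+\log\alpha^{-1}$, rather than one per coordinate. First I would try the multi-coordinate Gibbs bound with test function $F=(1-\sqrt{q_{\rm sp}})\frac{\lambda}{2}\sum_i a'_{t,i}$, where $a'_{t,i}$ is the witness truncated to the excess region. I would use \eqref{eq:hereditary-spread} to show $\E_\mu e^F\le\alpha^{-1}$ by computing the moment generating function factor by factor. The point of the factor $1-\sqrt{q_{\rm sp}}$ is to absorb $\sum_{J}\sqrt{q_{\rm sp}}^{|J|}$ into exactly $1$.
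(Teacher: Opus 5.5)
\textbf{There is a genuine gap: the step that controls the exponential-moment term is missing, and the concrete plan you end with rests on a false claim.}

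Your setup is right. The layer-cake identity gives the lower bound. The loss is at most $\theta\sum_i\int_{E_i}Q_i^\pi(v)\,dv$ with $E_i=\{v:Q_i^\pi(v)>\sqrt{q_{\rm sp}}e^{-\lambda v/2}\}$. Your test function with $\kappa\equiv\lambda/2$ is exactly the paper's: $F(t)=\sum_i\phi_i(a_{t,i})$ with $\phi_i(x)=\frac{\lambda}{2}\int_0^x\one_{E_i}(v)\,dv$. The subset expansion then gives $\log\E_\mu e^{F}\le\log\alpha^{-1}+\sum_i\log\E e^{\phi_i(\zeta)}$, where $\Prob(\zeta>v)=q_{\rm sp}e^{-\lambda v}$.

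The problem is what you do with the second term. It is not bounded by an absolute constant (this is the $(1+q_{\rm sp})^m$ obstacle you noticed). You do not need it to be, nor any entropy-chain inequality. Tail integration, $\log(1+x)\le x$, $\phi_i(v)\le\lambda v/2$, and the definition of $E_i$ give
\[
 \log\E e^{\phi_i(\zeta)}
 \le\frac{\lambda}{2}\int_{E_i}q_{\rm sp}e^{-\lambda v/2}\,dv
 \le\sqrt{q_{\rm sp}}\,\frac{\lambda}{2}\int_{E_i}Q_i^\pi(v)\,dv
 =\sqrt{q_{\rm sp}}\,\E_\pi\phi_i(a_{t,i}).
\]
So the cost is a $\sqrt{q_{\rm sp}}$-fraction of the gain $\E_\pi F$ itself. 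Gibbs duality then becomes the self-bounding inequality $(1-\sqrt{q_{\rm sp}})\E_\pi F\le K_\pi+\log\alpha^{-1}$. Since $\E_\pi F=\frac{\lambda}{2}\sum_i\int_{E_i}Q_i^\pi$, this is the claimed bound, with one copy of the entropy and no dependence on $m$. You already had the needed comparison $q_{\rm sp}e^{-\lambda v}=g(v)^2<g(v)Q_i^\pi(v)$ on $E_i$. You used it for a per-threshold likelihood-ratio heuristic, when applying it on the cost side closes the argument.

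Your final plan does not work. For $F=(1-\sqrt{q_{\rm sp}})\sum_i\phi_i(a_{t,i})$, the claim $\E_\mu e^F\le\alpha^{-1}$ is false. Take $\alpha=1$ and witnesses vanishing outside coordinate $1$, with $\mu\{a_{t,1}>0\}>0$ and $\mu\{a_{t,1}>v\}\le q_{\rm sp}e^{-\lambda v}$. Let $\pi$ be a point mass on a label of positive $\mu$-mass with $a_{t,1}=V>0$. Then $Q_1^\pi=1$ on $[0,V)$, so $E_1\supset[0,V)$. Hence $F(t)>0$ whenever $a_{t,1}>0$, and $\E_\mu e^F>1$.

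More generally, each factor $\E e^{(1-\sqrt{q_{\rm sp}})\phi_i(\zeta)}$ exceeds $1$ as soon as $E_i$ has positive length. Also, the sum of $q_{\rm sp}^{|J|/2}$ over subsets $J$ of $k$ coordinates is the product $(1+\sqrt{q_{\rm sp}})^{k}$. It is not a geometric series that the factor $1-\sqrt{q_{\rm sp}}$ could absorb. In the correct argument that factor appears only after $\sqrt{q_{\rm sp}}\,\E_\pi F$ is moved to the left-hand side, not as a rescaling of the exponent.
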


\begin{proof}
Let $\zeta$ be a nonnegative variable with
$\Prob(\zeta>v)=q_{\rm sp}e^{-\lambda v}$ for $v\ge0$.
For nonnegative nondecreasing absolutely continuous functions
$\phi_i$ with $\phi_i(0)=0$, expansion of the product
$\prod_i(1+(e^{\phi_i(a_{t,i})}-1))$, followed by joint-tail
integration, gives
\begin{equation}\label{eq:label-mgf-domination}
 \E_\mu e^{\sum_i\phi_i(a_{t,i})}
 \le \alpha^{-1}\prod_i\E e^{\phi_i(\zeta)}.
\end{equation}
Indeed, each nonempty product term is bounded using
\eqref{eq:hereditary-spread}; its integral factors. The empty term
is also bounded by $\alpha^{-1}\ge1$. Truncation removes any
initial boundedness assumptions on the functions.

Entropy duality, applied to the sum of these functions, now gives
\begin{equation}\label{eq:entropy-test}
 \sum_i\left(\E_\pi\phi_i(a_{t,i})
                   -\log\E e^{\phi_i(\zeta)}\right)
 \le K_\pi+\log\alpha^{-1}.
\end{equation}
For each $i$, let
\[
 E_i=\{v\ge0:Q_i^\pi(v)>
                    \sqrt{q_{\rm sp}}e^{-\lambda v/2}\},
 \qquad
 \phi_i(x)=\frac{\lambda}{2}\int_0^x\one_{E_i}(v)\,dv.
\]
Since $\phi_i(v)\le\lambda v/2$, tail integration and
$\log(1+x)\le x$ imply
\begin{align*}
 \log\E e^{\phi_i(\zeta)}
 &\le \frac{\lambda}{2}\int_{E_i}
          q_{\rm sp}e^{-\lambda v/2}\,dv\\
 &\le \sqrt{q_{\rm sp}}\,
             \frac{\lambda}{2}\int_{E_i}Q_i^\pi(v)\,dv
 =\sqrt{q_{\rm sp}}\,\E_\pi\phi_i(a_{t,i}).
\end{align*}
Insert this in \eqref{eq:entropy-test}. It follows that
\[
 \sum_i\int_{E_i}Q_i^\pi(v)\,dv
 \le\frac{2(K_\pi+\log\alpha^{-1})}
           {\lambda(1-\sqrt{q_{\rm sp}})}.
\]
Finally
$\E_\pi C_t(y)=\theta\sum_i\int_0^{y_i/\theta}Q_i^\pi(v)\,dv$.
The loss incurred by taking the minimum in
\eqref{eq:soft-energy} is bounded by $\theta$ times the last
display, proving \eqref{eq:soft-loss}.
\end{proof}

For $\eta>0$, let
\begin{align}
 H_\eta(y)&=\eta\log\E_\mu e^{C_t(y)/\eta}
   =\sup_\pi\{\E_\pi C_t(y)-\eta K_\pi\},\label{eq:Heta}\\
 \widehat H_\eta(y)&=\sup_\pi
              \{\mathcal A_y(\pi)-\eta K_\pi\}.\label{eq:Hhat}
\end{align}
For completeness, the optimizing distribution in \eqref{eq:Heta} is
\[
 \pi_{\eta,y}(t)=
 \frac{\mu(t)e^{C_t(y)/\eta}}{\E_\mu e^{C_t(y)/\eta}}.
\]
For every $\pi\ll\mu$,
$\E_\pi C_t(y)-\eta K_\pi
=H_\eta(y)-\eta D(\pi\Vert\pi_{\eta,y})$.
Nonnegativity of relative entropy proves the variational identity.
This also explains the temperature: decreasing $\eta$ makes the
selector favor larger gains more strongly.
The suprema may be restricted to the simplex on labels of positive
$\mu$-mass. Compactness and continuity give measurability; the
Lipschitz argument below also gives continuity.
\Cref{lem:softening} immediately yields
\begin{equation}\label{eq:Hcompare}
 0\le\widehat H_\eta\le H_\eta\le B,\qquad
 H_{\eta+b}-b\log\alpha^{-1}\le\widehat H_\eta.
\end{equation}
In particular, this is a temperature shift with a quantified
entropy cost; no posterior label marginal is silently replaced.

\subsection{A Lipschitz bound in saturated coordinates}

Now set $\lambda=p/(8u)$ and $\tau=p/(8m)$ as in
\eqref{eq:spread-parameters}, and put
\begin{equation}\label{eq:saturation-parameter}
 a=\frac{\lambda}{4\theta}=\frac{p}{32B},\qquad
 z_i=\psi_a(y_i)=1-e^{-ay_i}.
\end{equation}

\begin{lemma}[Uniform transformed gradient bound]
\label{lem:gradient}
There is an $L$-Lipschitz function $g_\eta:[0,1]^d\to\R$ such that
$\widehat H_\eta(y)=g_\eta((\psi_a(y_i))_i)$, where
\begin{equation}\label{eq:gradient}
 L^2\le \frac{m\sqrt{q_{\rm sp}}}{a^2}
       =1024\,\frac{B^2m}{p^2}e^{-\tau/2}.
\end{equation}
\end{lemma}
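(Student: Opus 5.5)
The plan is to work with one posterior $\pi$ at a time: show that $z\mapsto\mathcal A_{y(z)}(\pi)$, where $y_i(z)=-a^{-1}\log(1-z_i)$, is $L$-Lipschitz on the cube with $L$ independent of $\pi$. Then $g_\eta(z)=\sup_\pi\{\mathcal A_{y(z)}(\pi)-\eta K_\pi\}$ is a supremum of $L$-Lipschitz functions. It is finite, since it is bounded by $B$ through \eqref{eq:Hcompare}. Such a supremum is again $L$-Lipschitz, because subtracting the constant $\eta K_\pi$ does not affect the Lipschitz property. By construction $\widehat H_\eta(y)=g_\eta((\psi_a(y_i))_i)$.

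\emph{Step 1: the derivative in the $y$-coordinates.} From \eqref{eq:soft-energy}, $\mathcal A_y(\pi)$ is a sum of one-variable functions of the $y_i$. Each is absolutely continuous and nondecreasing, with a.e. derivative
\[
 \partial_{y_i}\mathcal A_y(\pi)
 =\min\{Q_i^\pi(y_i/\theta),\sqrt{q_{\rm sp}}\,e^{-\lambda y_i/(2\theta)}\}.
\]
By \eqref{eq:saturation-parameter}, $\lambda/(2\theta)=2a$, so the envelope equals $\sqrt{q_{\rm sp}}\,e^{-2ay_i}$.

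\emph{Step 2: change to saturated coordinates.} Since $dy_i/dz_i=1/(a(1-z_i))=e^{ay_i}/a$, the chain rule gives
\[
 0\le\partial_{z_i}\mathcal A\le\frac{e^{ay_i}}{a}\min\{Q_i^\pi(y_i/\theta),\sqrt{q_{\rm sp}}e^{-2ay_i}\}.
\]
The key point is to bound the square of this expression by the geometric mean of the two entries of the minimum, using $\min(x,w)^2\le xw$:
\[
 (\partial_{z_i}\mathcal A)^2\le\frac{e^{2ay_i}}{a^2}\,Q_i^\pi(y_i/\theta)\,\sqrt{q_{\rm sp}}\,e^{-2ay_i}
 =\frac{\sqrt{q_{\rm sp}}}{a^2}\,Q_i^\pi(y_i/\theta).
\]
Summing over $i$, with $Q_i^\pi(v)\le\pi\{a_{t,i}>0\}$, gives
\[
 \sum_iQ_i^\pi(y_i/\theta)\le\E_\pi|\supp a_t|\le m .
\]
Hence $\|\nabla_z\mathcal A\|_2^2\le m\sqrt{q_{\rm sp}}/a^2$ almost everywhere, uniformly in $\pi$ and $y$. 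The explicit form in \eqref{eq:gradient} follows from $a=p/(32B)$ and $\sqrt{q_{\rm sp}}=e^{-\tau/2}$.

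\emph{Step 3: boundary and regularity issues.} This is the only delicate point. The map $z\mapsto y$ sends $z_i=1$ to $y_i=\infty$. The integrand in \eqref{eq:soft-energy} is integrable up to $\infty$, being dominated by $\sqrt{q_{\rm sp}}e^{-\lambda v/2}$. So $\mathcal A$ extends continuously to the closed cube, and I define $g_\eta$ there through this extension. The function is separately absolutely continuous in each coordinate, with a bounded a.e. gradient on the open cube. For each fixed $\pi$ it is a sum of one-dimensional absolutely continuous functions, so integration along segments in the convex open cube gives the Lipschitz bound with constant $L$. Continuity then extends the bound to the closed cube.

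I expect no real obstacle beyond choosing the geometric-mean bound in Step 2. The cruder bound $\partial_{z_i}\mathcal A\le\sqrt{q_{\rm sp}}/a$ would lose control over the number of active coordinates. Only the factor $Q_i^\pi$ brings in the support bound $m$.
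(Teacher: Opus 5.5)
Your proof is correct and matches the paper's argument: the same a.e.\ derivative in $y_i$, the same change of variables with factor $e^{ay_i}/a$, and the same key bound, in which one factor of the minimum is paired with the envelope and the other with $Q_i^\pi$ (the paper writes this as $d_i^2e^{2ay_i}\le\sqrt{q_{\rm sp}}\,d_i$ with $\sum_i d_i\le m$), followed by taking the supremum over $\pi$. The only cosmetic difference is at the boundary $z_i=1$: the paper notes that the $i$th contribution is constant once $y_i\ge B$, since $Q_i^\pi(v)=0$ for $v\ge u$, whereas you use integrability of the envelope, and either suffices.
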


\begin{proof}
For fixed $\pi$, the derivative of $\mathcal A_y(\pi)$ in $y_i$,
at almost every $y_i$, equals
\[
 d_i=\min\{Q_i^\pi(y_i/\theta),
                  \sqrt{q_{\rm sp}}e^{-2ay_i}\}.
\]
Since every witness has at most $m$ nonzero coordinates,
$\sum_i d_i\le\sum_i Q_i^\pi(y_i/\theta)\le m$. The inverse
change of variables has derivative $e^{ay_i}/a$, so
\[
 \sum_i\left|\partial_{z_i}
       \mathcal A_{\Psi^{-1}(z)}(\pi)\right|^2
 =a^{-2}\sum_i d_i^2e^{2ay_i}
 \le a^{-2}\sqrt{q_{\rm sp}}\sum_i d_i
 \le a^{-2}m\sqrt{q_{\rm sp}}.
\]
Thus each function
$z\mapsto\mathcal A_{\Psi^{-1}(z)}(\pi)-\eta K_\pi$
is $L$-Lipschitz on the open cube, with the same constant.
Their supremum is also $L$-Lipschitz. It extends uniquely and
continuously to the closed cube. There is no singularity in the
payoffs at $z_i=1$: their $i$th contribution is already constant
once $y_i\ge\theta u=B$.
\end{proof}

\begin{proposition}[Exponential moments at every temperature]
\label{prop:free-mgf}
For $Y=|X|$ with unconditional log-concave $X$, there is a universal
$C_F$ such that
\begin{equation}\label{eq:free-mgf}
 \log\E\exp\{t(\widehat H_\eta(Y)-\E\widehat H_\eta(Y))\}
 \le C_Ft^2\frac{B^2m}{p^2}e^{-\tau/2}
 \qquad(t\in\R).
\end{equation}
\end{proposition}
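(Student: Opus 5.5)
This is a direct combination of \Cref{lem:gradient} and \Cref{thm:saturated-mgf}. By \Cref{lem:gradient}, for the saturation parameter $a=p/(32B)$ from \eqref{eq:saturation-parameter} there is an $L$-Lipschitz function $g_\eta$ on $[0,1]^d$ with
\[
 \widehat H_\eta(y)=g_\eta\bigl((\psi_a(y_i))_i\bigr),\qquad
 L^2\le 1024\,\frac{B^2m}{p^2}e^{-\tau/2}.
\]
Because $Y=|X|$, we get $\widehat H_\eta(Y)=g_\eta(\Psi(|X|))$, where $\Psi$ is the saturation map with every coordinate parameter equal to $a>0$. \Cref{thm:saturated-mgf} applies to any unconditional log-concave $X$ and any Lipschitz $g$ on the cube, with a constant that does not depend on the $a_i$. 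It gives
\[
 \log\E\exp\{t(\widehat H_\eta(Y)-\E\widehat H_\eta(Y))\}\le C_TL^2t^2,
\]
and substituting the bound on $L^2$ yields \eqref{eq:free-mgf} with $C_F=1024\,C_T=256\,C_{\rm clip}$.

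Three points need checking before the theorem can be applied.

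First, $g_\eta$ must be defined and Lipschitz on the whole closed cube. \Cref{lem:gradient} provides the continuous extension to $z_i=1$. Magnitudes $Y_i$ are finite almost surely, so $\Psi(Y)$ lies in $[0,1)^d$ in any case.

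Second, $g_\eta$ must be measurable and real valued. It is a supremum of uniformly Lipschitz functions that are bounded above by $B$. It is finite because $\widehat H_\eta\ge 0$, which one sees by taking $\pi=\mu$.

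Third, the parameters $\eta$, $\theta$ and $\mu$ are deterministic, so $g_\eta$ does not depend on the source. This is the only property the concentration theorem needs, and it holds.

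I do not expect a genuine obstacle, since all the analytic work sits in \Cref{thm:saturated-mgf} and \Cref{lem:gradient}. The one place to be careful is the derivative computation behind the Lipschitz constant. There, $\sum_i d_i^2e^{2ay_i}\le\sqrt{q_{\rm sp}}\sum_i d_i$ uses $d_i\le\sqrt{q_{\rm sp}}e^{-2ay_i}$. This requires the envelope exponent $\lambda v/2$ at $v=y_i/\theta$ to equal $2ay_i$, which is exactly the choice $a=\lambda/(4\theta)$. The computation also needs $d_i\le 1$ together with the sparsity bound $\sum_i Q_i^\pi\le m$. All of this was verified in \Cref{lem:gradient}, so the proposition follows.
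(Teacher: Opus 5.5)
Your proposal is correct and is the paper's own argument: apply \Cref{thm:saturated-mgf} to the function $g_\eta$ of \Cref{lem:gradient}, with every coordinate parameter equal to $a=p/(32B)$, and substitute the bound on $L^2$ to get $C_F=1024\,C_T$. Your extra checks (extension to the closed cube, finiteness and measurability of the supremum, and independence of $g_\eta$ from the source) are sound and are left implicit in the paper.
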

\begin{proof}
Apply \Cref{thm:saturated-mgf} to the function in
\Cref{lem:gradient}. One may take $C_F=1024C_T$.
\end{proof}

\subsection{From feasible witnesses to positive mean energy}

The next estimate combines the geometric and analytic parts of the
proof. Its first term is the available witness gain. The three
subtracted terms are, respectively, the probability cost of realizing
a witness, the cost of restricting the reference label law, and the
fluctuation cost supplied by transport. In the main application
$\alpha=1$, so the middle loss vanishes.

\begin{proposition}[Mean free-energy lower bound]\label{prop:free-mean}
Assume in addition that, for every label in the support of $\mu$,
\begin{equation}\label{eq:feasible-general}
 a_t([d])\ge\beta,\qquad \Prob(Y\ge a_t)\ge e^{-P_{\rm tail}}.
\end{equation}
With $\eta'=\eta+b$ one has
\begin{equation}\label{eq:free-mean}
 \E H_\eta(Y)\ge
 \theta\beta-\eta'\theta P_{\rm tail}
 -b\log\alpha^{-1}
 -C_F\frac{B^2m}{p^2\eta'}e^{-\tau/2}.
\end{equation}
If $\tau\ge\log4$, then $b\le32B/p$.
\end{proposition}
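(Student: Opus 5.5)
Our route runs through the softened free energy at temperature $\eta$, using the comparison \eqref{eq:Hcompare} in the form $H_\eta\ge\widehat H_\eta\ge H_{\eta'}-b\log\alpha^{-1}$. It therefore suffices to bound $\E\widehat H_\eta(Y)$ from below. We do this in three steps: an exponential lower bound on $\E e^{\widehat H_\eta(Y)/\eta'}$ from feasibility, conversion of that bound into a mean bound via \Cref{prop:free-mgf}, and a separate check of the estimate on $b$.

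\emph{Step 1: exponential lower bound from feasibility.} For each label $t$ in the support of $\mu$, on the event $\{Y\ge a_t\}$ we have $Y_i\ge a_{t,i}\ge\theta a_{t,i}$. Hence $C_t(Y)=\theta a_t([d])\ge\theta\beta$. By \eqref{eq:tail-shrink} one could even use the event $\{Y\ge\theta a_t\}$, which has probability at least $e^{-\theta P_{\rm tail}}$; this is why the term $\theta P_{\rm tail}$ appears. Using that event, and keeping $\mu$ as the reference law,
\[
 \E e^{H_{\eta'}(Y)/\eta'}=\E\,\E_\mu e^{C_t(Y)/\eta'}\ge\E_\mu\Prob(Y\ge\theta a_t)\,e^{\theta\beta/\eta'}\ge e^{\theta\beta/\eta'-\theta P_{\rm tail}}.
\]
Combined with $\widehat H_\eta\ge H_{\eta'}-b\log\alpha^{-1}$, this gives
\[
 \log\E e^{\widehat H_\eta(Y)/\eta'}\ge\frac{\theta\beta-b\log\alpha^{-1}}{\eta'}-\theta P_{\rm tail}.
\]

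\emph{Step 2: from the exponential moment to the mean.} Apply \Cref{prop:free-mgf} with $t=1/\eta'$:
\[
 \log\E e^{\widehat H_\eta/\eta'}\le\frac{\E\widehat H_\eta}{\eta'}+C_F\frac{B^2m}{p^2\eta'^2}e^{-\tau/2}.
\]
Multiply both bounds by $\eta'$ and combine them. Since $H_\eta\ge\widehat H_\eta$, this yields \eqref{eq:free-mean}. Note that the temperature actually used in the concentration step is $\eta'$, which matches the form $C_F B^2m/(p^2\eta')$ of the last term.

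\emph{Step 3: the bound on $b$.} If $\tau\ge\log4$, then $\sqrt{q_{\rm sp}}\le1/2$. Therefore
\[
 b=\frac{2\theta}{\lambda(1-\sqrt{q_{\rm sp}})}\le\frac{4\theta}{\lambda}=\frac{4\theta\cdot8u}{p}=\frac{32B}{p}.
\]

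The only delicate point I expect is Step 1: one must use the correct variational direction. Here we use the lower bound $\E_\mu e^{C/\eta'}\ge\mu(t)e^{C_t/\eta'}$ summed over $t$, which is just Tonelli, together with joint measurability. The main substantive input, concentration, is already packaged in \Cref{prop:free-mgf}.
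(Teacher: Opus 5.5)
Your proposal is correct and is essentially the paper's proof. Tail shrinkage from \Cref{lem:tails} gives the lower bound $\log\E e^{H_{\eta'}(Y)/\eta'}\ge\theta\beta/\eta'-\theta P_{\rm tail}$, the comparison \eqref{eq:Hcompare} transfers it to $\widehat H_\eta$ at cost $b\log\alpha^{-1}$, and \Cref{prop:free-mgf} at $t=1/\eta'$ converts the exponential moment into the mean bound, with $b\le 32B/p$ following from $1-\sqrt{q_{\rm sp}}\ge1/2$ exactly as in the paper. The only difference is cosmetic: you apply the comparison on the lower-bound side rather than the upper-bound side. Also, your closing remark about ``$\mu(t)e^{C_t/\eta'}$ summed over $t$'' is loosely worded, since what you actually use is Tonelli followed by restriction to the event $\{Y\ge\theta a_t\}$.
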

\begin{proof}
By \Cref{lem:tails}, each event $Y\ge\theta a_t$ has probability
at least $e^{-\theta P_{\rm tail}}$. On that event,
$C_t(Y)=\theta a_t([d])\ge\theta\beta$. Therefore
\begin{equation}\label{eq:annealed}
 \log\E e^{H_{\eta'}(Y)/\eta'}
 =\log\E_\mu\E e^{C_t(Y)/\eta'}
 \ge\frac{\theta\beta}{\eta'}-\theta P_{\rm tail}.
\end{equation}
On the other hand, \eqref{eq:Hcompare} and
\eqref{eq:free-mgf} at $t=1/\eta'$ give
\[
 \log\E e^{H_{\eta'}(Y)/\eta'}
 \le\frac{\E\widehat H_\eta(Y)+b\log\alpha^{-1}}{\eta'}
      +C_F\frac{B^2m}{p^2(\eta')^2}e^{-\tau/2}.
\]
Combine this with \eqref{eq:annealed}, multiply by $\eta'$, and
use $H_\eta\ge\widehat H_\eta$. Finally
$1-\sqrt{q_{\rm sp}}\ge1/2$ when $\tau\ge\log4$, so
$b\le4\theta/\lambda=32B/p$.
\end{proof}

\section{The signed comparison and its constants}\label{sec:signed}

We now use a uniform law $\mu$ on $n=e^p$ labels and the spread
condition \eqref{eq:S}, so $\alpha=1$. Define, for a fixed
$y\in\R_+^d$,
\begin{equation}\label{eq:width-y}
 w(y)=\E_\varepsilon\max_t
                    \left|\sum_{i\in A_t}\varepsilon_i y_i\right|.
\end{equation}
The next argument is deterministic in $y$.
A large positive gain for many labels does not by itself prevent
cancellation after signs are restored. The overlap estimate resolves
this problem: it yields a large subfamily for which each support
has substantial mass outside every other support. The corresponding
signed coefficient vectors are then separated, and Bernoulli
minoration applies.

\begin{lemma}[Many capped gains force signed width]\label{lem:signed}
For $\gamma>0$, let
$G_y=\{t:C_t(y)\ge4\gamma B\}$ and let $\delta$ be as in
\eqref{eq:overlap-tail}. If
$w(y)<c_B\gamma B$, then
\begin{equation}\label{eq:good-count}
 |G_y|<(1+2\delta n)e^{2m}.
\end{equation}
Consequently, with
\begin{equation}\label{eq:qstar}
 q_*=\frac{(1+2\delta n)e^{2m}}n,
\end{equation}
if $q_*<1$ and $B/\eta\le\log(1/q_*)$, then
\begin{equation}\label{eq:low-width-free}
 w(y)<c_B\gamma B
 \quad\Longrightarrow\quad
 H_\eta(y)\le4\gamma B+\eta\log2.
\end{equation}
\end{lemma}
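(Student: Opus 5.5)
We argue by contraposition. Assume $|G_y|\ge(1+2\delta n)e^{2m}$. We extract from $G_y$ a subfamily of size at least $e^{2m}$ whose signed coefficient vectors $v_t=(y_i\one_{A_t}(i))_i$ are separated in the Bernoulli $L_{2m}$ metric at scale $\gamma B$. Then \Cref{input:bernoulli} with $q=2m\ge1$ gives $w(y)\ge c_B\gamma B$.

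\emph{Step 1: bounding overlaps.} Fix $s$ and apply \Cref{lem:overlap} with $D=A_s$, which is allowed since $|A_s|\le m$. Because $\mu$ is uniform on $n$ labels, at most $\delta n$ labels $t$ satisfy $a_t(A_s)\ge\gamma u$, that is, $\theta a_t(A_s)\ge\gamma B$. For every other $t\in G_y$ we have
\[
 \sum_{i\in A_t\setminus A_s}\min(\theta a_{t,i},y_i)
 \ge C_t(y)-\theta a_t(A_s)\ge 3\gamma B,
\]
and hence $y(A_t\setminus A_s)\ge3\gamma B$.

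\emph{Step 2: extracting a separated subfamily.} Join $s\ne t$ in $G_y$ by an edge if either directed inequality fails. This graph has at most $|G_y|\delta n$ edges. The bound $\alpha(G)\ge N^2/(N+2|E|)$, used as in \Cref{cor:directed}, gives an independent set $\mathcal F$ with $|\mathcal F|\ge|G_y|/(1+2\delta n)\ge e^{2m}$.

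\emph{Step 3: Bernoulli separation.} For $s\ne t$ in $\mathcal F$, the difference $v_t-v_s$ equals $\pm y_i$ on $A_t\triangle A_s$, a set of at most $2m$ coordinates. With probability $2^{-|A_t\triangle A_s|}\ge 2^{-2m}$ all signs align with the coefficients, so
\[
 \norm{\textstyle\sum_i(v_{t,i}-v_{s,i})\varepsilon_i}_{2m}
 \ge 2^{-2m/(2m)}\,y(A_t\triangle A_s)
 \ge \tfrac12\cdot 3\gamma B\ge\gamma B .
\]
If $m=0$ there is nothing to prove, since then the threshold $(1+2\delta n)e^{0}$ is trivial or all $C_t$ vanish. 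So take $m\ge1$, which makes $q=2m\ge1$. Then \Cref{input:bernoulli} yields $w(y)\ge\E_\varepsilon\max_{t\in\mathcal F}|\ip{v_t}{\varepsilon}|\ge c_B\gamma B$. This contradicts $w(y)<c_B\gamma B$ and proves \eqref{eq:good-count}.

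\emph{Step 4: the free-energy bound.} Split $\E_\mu e^{C_t(y)/\eta}$ over $G_y$ and its complement. Labels in $G_y$ contribute at most
\[
 \frac{|G_y|}{n}e^{B/\eta}<q_*e^{B/\eta}\le1,
\]
using $C_t\le B$, \eqref{eq:good-count}, and $B/\eta\le\log(1/q_*)$. Labels outside $G_y$ contribute at most $e^{4\gamma B/\eta}\ge1$. Hence $\E_\mu e^{C_t(y)/\eta}\le 2e^{4\gamma B/\eta}$, and taking $\eta\log$ gives \eqref{eq:low-width-free}.

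The main obstacle is Step 3: converting the capped gain outside $A_s$ into a Bernoulli $L_{2m}$ separation at the correct cardinality scale. The alignment trick works only because the symmetric difference has at most $2m$ coordinates, which matches the moment order $q=2m$ and the family size $e^{2m}$. The bookkeeping of the factor $e^{2m}$ must be kept consistent with the requirement $q\ge1$ in \Cref{input:bernoulli}.
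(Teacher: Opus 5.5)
Your proof is correct and follows the paper's argument essentially step for step. Both use the overlap lemma with $D=A_s$, the graph on $G_y$ with an independent set of size at least $|G_y|/(1+2\delta n)$, sign alignment on $A_t\triangle A_s$ to get Bernoulli $L_{2m}$ separation, \Cref{input:bernoulli} at order $2m$, and the split of the partition function. The differences are cosmetic: you use only one directed mass, which gives separation $\tfrac32\gamma B$ instead of the paper's $3\gamma B$ and is still enough, and you handle the degenerate case $m=0$ explicitly.
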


\begin{proof}
For each fixed label $s$, \Cref{lem:overlap} with $D=A_s$ gives
at most $\delta n$ labels $t$ with $a_t(A_s)\ge\gamma u$.
On $G_y$ join $s,t$ if either $a_t(A_s)\ge\gamma u$ or
$a_s(A_t)\ge\gamma u$. The number of undirected edges is at most
$\delta n|G_y|$. Thus there is an independent set $V\subset G_y$
with
\[
 |V|\ge\frac{|G_y|}{1+2\delta n}.
\]
For distinct $s,t\in V$,
\[
 \sum_{i\in A_t\setminus A_s}y_i
 \ge C_t(y)-\theta a_t(A_s)\ge3\gamma B,
\]
and the same holds with $s,t$ reversed. The difference between the
coefficient vectors $(y_i\one_{A_t}(i))_i$ and
$(y_i\one_{A_s}(i))_i$ therefore has $\ell_1$ norm at least
$6\gamma B$ and support of size at most $2m$.
The signs all align with this difference on an event of probability
at least $2^{-2m}$. Hence its Bernoulli $L_{2m}$ norm is at least
$3\gamma B$.

If $|V|\ge e^{2m}$, \Cref{input:bernoulli} gives
$w(y)\ge3c_B\gamma B$, contradicting the hypothesis of the lemma.
This proves \eqref{eq:good-count}.

Since $C_t\le B$, splitting the partition function between $G_y$
and its complement yields
\[
 e^{H_\eta(y)/\eta}
 \le e^{4\gamma B/\eta}+q_*e^{B/\eta}.
\]
Under the stated temperature condition the second term is at most
one, and the right-hand side is at most
$2e^{4\gamma B/\eta}$. Taking its logarithm proves
\eqref{eq:low-width-free}.
\end{proof}

\subsection{A closed spread-witness theorem}

At this point all the necessary estimates are available. The theorem
below chooses their parameters together. On a first reading, the
important order is: fix the required gain and overlap levels; make
$m/p$ small; choose the shrinkage factor and temperature; finally
take $p$ large enough. This prevents a later choice from changing a
constant already used in the sparse reduction.

\begin{theorem}[Signed minoration from spread witnesses]
\label{thm:spread-minoration}
Fix $b_0=1/8$ and $\Lambda=3$. There are universal
$\epsilon_*>0$, $p_*<\infty$, and $c_*>0$ with the following
property. Let $Y=|X|$ for an unconditional log-concave $X$.
Suppose a family of $n=e^p$ labels, $p\ge p_*$, has supports and
witnesses satisfying
\begin{align}
 &\supp a_t\subset A_t,\quad |A_t|\le m\le\epsilon_*p,
       \quad m\ge1,\label{eq:spread-thm-support}\\
 &b_0u\le a_t([d])\le u,\quad
       \Prob(Y\ge a_t)\ge e^{-\Lambda p},\label{eq:spread-thm-tail}
\end{align}
where $u>0$, together with \eqref{eq:S} for the parameters
\eqref{eq:spread-parameters}. Then
\begin{equation}\label{eq:spread-thm-conclusion}
 \E_Y w(Y)\ge c_*u.
\end{equation}
\end{theorem}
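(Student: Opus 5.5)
The plan is to play the two sides of \Cref{sec:softening,sec:signed} against each other at one well-chosen temperature. \Cref{prop:free-mean} gives a lower bound on $\E H_\eta(Y)$. \Cref{lem:signed} gives an upper bound on $H_\eta(y)$ at every realization where the signed width $w(y)$ is small. The difference between the two bounds is then converted, via Markov's inequality, into a lower bound on $\E_Y w(Y)$.

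\emph{Step 1 (lower bound on the mean).} I apply \Cref{prop:free-mean} with $\alpha=1$, $\beta=b_0u$ and $P_{\rm tail}=\Lambda p$, which are supplied by \eqref{eq:spread-thm-tail}. The parameters are fixed in the order $\gamma,\kappa,\theta,\epsilon_*,p_*$. Set $\eta'=\kappa u/p$ with $\kappa=b_0/(4\Lambda)$. Then the tail cost is $\eta'\theta\Lambda p=b_0\theta u/4=b_0B/4$.

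Since $\tau=p/(8m)\ge 1/(8\epsilon_*)\ge\log 4$, \Cref{prop:free-mean} gives $b\le 32B/p=32\theta u/p$. Choosing $\theta\le\kappa/64$ makes $\eta=\eta'-b\ge\kappa u/(2p)>0$. The fluctuation term is then at most
\[
 C_F\,\frac{\theta^2u^2m}{p^2\eta'}\le \frac{C_F\theta^2\epsilon_*u}{\kappa},
\]
which is below $b_0\theta u/8$ once $\epsilon_*\le b_0\kappa/(8C_F\theta)$. This yields $\E H_\eta(Y)\ge b_0B/2$.

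\emph{Step 2 (upper bound on low-width realizations).} Take $\gamma=b_0/32$, so that $4\gamma B=b_0B/8$. To apply \Cref{lem:signed} I first check its hypotheses. With $\delta=\exp\{mq_{\rm sp}-\gamma p/16\}$ and $m\le\epsilon_*p$, I get
\[
 q_*\le 3\exp\{\epsilon_*p+2\epsilon_*p-\gamma p/16\}\le e^{-\gamma p/32}
\]
for $\epsilon_*\le\gamma/200$ and $p\ge p_*$. In particular $q_*<1$. The temperature condition requires $B/\eta\le 2\theta p/\kappa\le\gamma p/32\le\log(1/q_*)$, which holds if additionally $\theta\le\gamma\kappa/64$.

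With these hypotheses in place, \eqref{eq:low-width-free} and $H_\eta\le B$ give the pointwise bound
\[
 H_\eta(y)\le 4\gamma B+\eta\log 2+B\,\one_{\{w(y)\ge c_B\gamma B\}}.
\]
Here $\eta\log2\le\kappa u/p$, which is at most $b_0B/16$ for $p\ge p_*$.

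\emph{Step 3 (conclusion).} Taking expectations in Step 2 and comparing with Step 1 gives
\[
 \Prob\bigl(w(Y)\ge c_B\gamma B\bigr)\ge \frac{b_0}{2}-\frac{b_0}{8}-\frac{b_0}{16}\ge\frac{b_0}{4}.
\]
Because $w\ge0$, this implies $\E_Y w(Y)\ge c_B\gamma B\,b_0/4=c_Bb_0^2\theta u/128$, which is \eqref{eq:spread-thm-conclusion} with a universal $c_*$.

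The main obstacle is the compatibility of the constraints on $\eta$. It must be small compared with $B/(\Lambda p)$ so that the feasibility cost does not swallow the witness gain. It must exceed $b$, which is of order $B/p$, so that the softening shift is admissible. And $B/\eta$ must stay below $\log(1/q_*)$, which is of order $\gamma p$. All three scale like $u/p$, so the key point is that $\theta$, the ratio $B/u$, can be made small relative to $\kappa$ and $\gamma$ independently of $p$. The fluctuation term, which scales like $\theta^2 m$ against a gain of $\theta$, is then killed separately by choosing $\epsilon_*$ small; this is why $\epsilon_*$ has to be chosen after $\theta$.
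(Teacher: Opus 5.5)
Your proposal is correct and follows the paper's route: a lower bound on $\E H_\eta(Y)$ from \Cref{prop:free-mean}, the pointwise bound from \Cref{lem:signed} on the low-width event, and a comparison giving $\Prob(w(Y)\ge c_B\gamma B)\ge b_0/4$. The difference is only bookkeeping: the paper fixes $\eta=B/(\kappa p)$ and bounds $\eta'\le A_\kappa B/p$, while you fix $\eta'=\kappa u/p$ and take $\theta$ small so that $\eta\ge\eta'/2$; this is why your $\epsilon_*$ constraint involves $\theta$, although that constraint only weakens as $\theta$ decreases, so choosing $\epsilon_*$ after $\theta$ is not actually forced.
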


\begin{proof}
\emph{Step 1: choose universal parameters.}
All constants will be selected before the source or family is used.
Let $C_F$ be the constant from \Cref{prop:free-mgf}, and choose
\begin{equation}\label{eq:gamma-kappa}
 \gamma=\frac{b_0}{64},\qquad \kappa=\frac{\gamma}{128}.
\end{equation}
Choose $\epsilon_*>0$ so small that
\begin{equation}\label{eq:epsilon-choice}
 \epsilon_*\le\frac{\gamma}{256},\qquad
 \epsilon_*\le\frac{1}{8\log4},\qquad
 C_F\kappa\epsilon_*\le\frac{b_0}{8}.
\end{equation}
Put
\begin{equation}\label{eq:theta-choice}
 A_\kappa=\kappa^{-1}+32,\qquad
 \theta=\min\!\left(\frac12,\frac{b_0}{8\Lambda A_\kappa}\right),
 \qquad B=\theta u,\qquad \eta=\frac{B}{\kappa p}.
\end{equation}
Finally choose
\begin{equation}\label{eq:pstar-choice}
 p_*\ge
 \max\!\left\{1,\frac{128\log3}{\gamma},
                  \frac{16\log2}{b_0\kappa}\right\}.
\end{equation}

\emph{Step 2: keep a fixed proportion of the witness gain.}
We have $\tau=p/(8m)\ge\log4$, so
$b\le32B/p$ and
\[
 \eta\le\eta'=\eta+b\le A_\kappa B/p.
\]
Apply \Cref{prop:free-mean} with
$\beta=b_0u$, $P_{\rm tail}=\Lambda p$, and $\alpha=1$.
The tail-feasibility loss is at most
\[
 \eta'\theta\Lambda p
 \le A_\kappa\Lambda\theta B\le b_0B/8.
\]
The transport loss is at most
\[
 C_F\frac{B^2m}{p^2\eta'}e^{-\tau/2}
 \le C_F\kappa B\frac mp e^{-\tau/2}
 \le b_0B/8.
\]
It follows that
\begin{equation}\label{eq:positive-free-energy}
 \E H_\eta(Y)\ge\frac{3b_0}{4}B
                   \ge\frac{b_0}{2}B.
\end{equation}

\emph{Step 3: ensure that small signed width forces small free energy.}
We next check the combinatorial temperature condition in
\Cref{lem:signed}. Since $m\le\epsilon_*p$ and
$q_{\rm sp}\le1$,
\[
 \delta=e^{mq_{\rm sp}-\gamma p/16}
 \le e^{-\gamma p/32}.
\]
Using $\gamma<1$ and \eqref{eq:epsilon-choice},
\begin{align*}
 q_*&=e^{2m}(e^{-p}+2\delta)
 \le3e^{2\epsilon_*p-\gamma p/32}\\
 &\le3e^{-3\gamma p/128}
 \le e^{-\gamma p/64}.
\end{align*}
The last inequality uses \eqref{eq:pstar-choice}.
Consequently
$\log(1/q_*)\ge\gamma p/64\ge\kappa p=B/\eta$.
Also
\[
 4\gamma B=\frac{b_0B}{16},\qquad
 \eta\log2\le\frac{b_0B}{16}.
\]
By \Cref{lem:signed}, on
$\{w(Y)<c_B\gamma B\}$ one therefore has
$H_\eta(Y)\le b_0B/8$. Since $H_\eta\le B$ everywhere,
\eqref{eq:positive-free-energy} implies
\[
 \frac{b_0B}{2}
 \le \frac{b_0B}{8}
       +B\,\Prob\{w(Y)\ge c_B\gamma B\}.
\]
\emph{Step 4: average the signed width.}
The probability is at least $3b_0/8$, hence at least $b_0/4$.
Thus
\[
 \E w(Y)\ge\frac{b_0c_B\gamma B}{4}
             =c_*u,\qquad
 c_*=\frac{b_0c_B\gamma\theta}{4}>0.
\]
Every displayed constant is universal.
\end{proof}

\section{Completion of the packing argument}\label{sec:completion}

We now connect the intermediate statements. The reduced theorem first
starts from actual joint-tail bodies and performs the two selections.
The proof of the main theorem then checks that the classical sparse
reduction produces exactly those bodies and that its approximation
error can be absorbed.

\subsection{A reduced theorem using actual joint-tail bodies}

\begin{theorem}[Minoration under the directed body alternative]
\label{thm:reduced}
There are universal $\zeta>0$, $q_0<\infty$, and $c_*>0$ such
that the following holds. Let $Y=|X|$ for an unconditional
log-concave vector, let $|\mathcal T|=N=e^q$ with $q\ge q_0$,
and suppose $|A_t|\le m\le\zeta q$. For $u>0$ put
$K_t=K_{A_t}(q,u)$ as in \eqref{eq:tail-body}.
If these bodies satisfy \eqref{eq:H}, then
\begin{equation}\label{eq:reduced-conclusion}
 \E_Y\E_\varepsilon\max_{t\in\mathcal T}
          \left|\sum_{i\in A_t}\varepsilon_iY_i\right|\ge c_*u.
\end{equation}
\end{theorem}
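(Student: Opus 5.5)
The plan is to chain the three earlier selection steps and then invoke \Cref{thm:spread-minoration}. First apply \Cref{prop:common} to the bodies $K_t=K_{A_t}(q,u)$. These are compact, convex and downward closed by \Cref{lem:tails}, and they satisfy \eqref{eq:H}. This gives witnesses $a_t\in K_t$ with $a_t([d])\le u/2$ and the counting bound \eqref{eq:common-count}. Choosing $q_0\ge64$ and $\zeta\le1/64$, \Cref{cor:directed} then yields $\mathcal T_1$ with $|\mathcal T_1|\ge e^{q/2}$ and $a_t(A_t\setminus A_s)\ge u/8$ for distinct retained labels. After discarding labels we may assume $|\mathcal T_1|=e^{p_1}$ with $p_1\ge q/2$ (up to rounding). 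Each $a_t\in K_t$ also gives $\Prob(Y\ge a_t)\ge e^{-q}$.

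Next apply \Cref{prop:spread} to $\mathcal T_1$, with mass scale $u':=u/2$ for the witnesses. This produces $\mathcal S$ with $p=\log|\mathcal S|\ge 3p_1/4\ge 3q/8$ and a common core $I$. Delete $I$ from every support and restrict $a_t$ to $I^c$. By \Cref{lem:projection} this can only decrease the left side of \eqref{eq:reduced-conclusion}, and the restricted witnesses remain feasible: $\Prob(Y_{I^c}\ge a_t|_{I^c})\ge e^{-q}\ge e^{-3p}$. This gives $\Lambda=3$, using $q\le 8p/3\le3p$. Directed separation is preserved, so for any two retained labels $a_t([d])\ge a_t(A_t\setminus A_s)\ge u/8=u'/4$. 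Thus the hypothesis $b_0u'\le a_t([d])\le u'$ of \eqref{eq:spread-thm-tail} holds with room to spare, since $b_0=1/8$. The spread estimate \eqref{eq:S} follows from \eqref{eq:spread-strong} with $p\le p_1$, with $u'$ as the mass scale. The support bound gives $m\le\zeta q\le (8\zeta/3)p$, so choosing $\zeta\le 3\epsilon_*/8$ yields $m\le\epsilon_*p$. Also $m\ge1$, because \eqref{eq:H} is impossible with empty supports. Finally take $q_0$ large enough that $p\ge3q_0/8\ge p_*$.

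With all hypotheses of \Cref{thm:spread-minoration} verified for the projected vector $Y_{I^c}$, which is the magnitude of an unconditional log-concave vector by \Cref{lem:signs}, that theorem gives $\E w(Y_{I^c})\ge c_*u/2$. Undoing the projection via \Cref{lem:projection} yields \eqref{eq:reduced-conclusion} with constant $c_*/2$.

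The main obstacle is bookkeeping rather than analysis, in two places. First, the exponent bookkeeping: $p$ may be smaller than $q$ by a constant factor, so feasibility at level $e^{-q}$ must fit the fixed $\Lambda=3$, and the sparsity ratio $m/p$ must stay below $\epsilon_*$. Second, the cardinality $|\mathcal S|$ need not be exactly $e^p$ with integer structure. Passing to a subfamily of exact size introduces the factor $\alpha^{-1}$ in the spread bound, which I would absorb by taking the uniform law on $\mathcal S$ itself and defining $p=\log|\mathcal S|$ directly. Under that definition \eqref{eq:S} holds with $\alpha=1$, and this is exactly the normalization used in \Cref{thm:spread-minoration}.
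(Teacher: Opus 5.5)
Your proposal is correct and follows the paper's proof essentially step for step. The paper makes the same choices $\zeta=\min(1/64,3\epsilon_*/8)$ and $q_0=\max(64,8p_*/3)$, runs the same chain \Cref{prop:common}, \Cref{cor:directed}, \Cref{prop:spread}, core deletion and \Cref{thm:spread-minoration}, and does the same $p\ge 3q/8$, $\Lambda=3$ bookkeeping with the uniform law on $\mathcal S$. The only difference is cosmetic: you use the mass scale $u/2$ and get $c_*/2$, whereas the paper passes $u$ itself to \Cref{thm:spread-minoration} and keeps the constant $c_*$, which works because $b_0u=u/8\le a_t([d])\le u/2\le u$.
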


\begin{proof}
Use the constants of \Cref{thm:spread-minoration} and set
\begin{equation}\label{eq:zeta-q0}
 \zeta=\min(1/64,3\epsilon_*/8),\qquad
 q_0=\max(64,8p_*/3).
\end{equation}
\Cref{prop:common,cor:directed} give fixed witnesses
$a_t\in K_t$ and a subfamily $\mathcal T_1$ with
$p_1=\log|\mathcal T_1|\ge q/2$ satisfying
\eqref{eq:directed}. Apply \Cref{prop:spread} once, retain
$\mathcal S$, and delete its common core. Set
$p=\log|\mathcal S|$; then
\[
 p\ge3p_1/4\ge3q/8\ge p_*,
 \qquad m\le\zeta q\le\epsilon_*p.
\]
The restricted witnesses have mass at most $u$ and at least $u/8$:
for each label, take any other retained label in
\eqref{eq:directed}. Projection changes neither that directed
mass nor its lower bound.

Feasibility improves under projection, and
\[
 \Prob(Y_{\rm projected}\ge a_{t,\rm projected})
 \ge e^{-q}\ge e^{-3p}.
\]
The projected source is still unconditional log-concave.
The uniform law on the retained labels satisfies \eqref{eq:S}
with $p/(8m)$ and $p/(8u)$, by \Cref{prop:spread}.
All assumptions of \Cref{thm:spread-minoration} hold with
$b_0=1/8$ and $\Lambda=3$. Its conclusion gives the lower bound
for the projected signed width. \Cref{lem:projection} transfers
it to the original width.
\end{proof}

\subsection{Proof of the main theorem}

\begin{proof}[Proof of \Cref{thm:main}]
An unconditional log-concave law has mean zero and diagonal
covariance. Its affine support is a coordinate subspace: the affine
support is invariant under every coordinate sign change, and its
linear span is consequently generated by a subset of coordinate
axes. Delete zero coordinates and rescale the others diagonally to
make the law isotropic. Apply the inverse diagonal change to the
labels. Both $d_P$ and $\W_X$ are unchanged, so this normalization
loses nothing.

Choose the constants in the following order. First choose
$\epsilon_*,p_*,c_*$ as in \Cref{thm:spread-minoration};
then choose $\zeta,q_0$ by \eqref{eq:zeta-q0}. In the classical
reduction take
\begin{equation}\label{eq:rounding-final}
 h=\zeta/4,\qquad e=\frac{c_*}{32C_w}.
\end{equation}
Choose a universal $P_{\rm large}$ large enough for
\Cref{input:reduction}, for $P/4\ge q_0$, and for
\[
 \frac P4\le q:=\log\lceil e^{P/4}\rceil\le\frac P2
 \quad(P\ge P_{\rm large}).
\]
This order avoids any dependence of the later minoration constant
on a rounding parameter still to be chosen.

For $P\ge P_{\rm large}$, the easy branch of
\Cref{input:reduction} already gives the result. In the other
branch use its supports $A_t$ and coefficients $r_i$, and set
$Y_i=r_i|X_i|$. Coordinates with $r_i=0$ may be removed.
Let $m=\max_t|A_t|$. Then
$m\le hP\le\zeta q$ and $2m\le P$.

Fix distinct $s,t$. On $J=A_t\triangle A_s$ unconditionality
identifies the moment of the signed difference with the moment
having all coefficients positive:
\[
 \norm{\sum_{i\in J}r_iX_i}_P
 =\norm{\sum_i r_i(\one_{A_t}(i)-\one_{A_s}(i))X_i}_P
 \ge A/2.
\]
\Cref{input:witness} supplies a vector $v\ge0$ supported on $J$
such that
$\Prob(Y\ge v)\ge e^{-P}$ and $v(J)\ge A/(2C_w)$.
At least one of the two directed differences carries mass
$A/(4C_w)$. Restrict $v$ to that difference and shrink it by
$q/P$. Tail log-concavity gives probability at least $e^{-q}$,
and its mass is at least
\[
 \frac qP\,\frac{A}{4C_w}\ge\frac{A}{16C_w}.
\]
Scale down further, if necessary, to mass exactly
\begin{equation}\label{eq:u-final}
 u=\frac{A}{16C_w}.
\end{equation}
It follows that the actual joint-tail bodies
$K_t=K_{A_t}(q,u)$ satisfy the directed alternative \eqref{eq:H}.
No product of marginal probabilities is substituted here.

Apply \Cref{thm:reduced} and then the transfer estimate
\eqref{eq:reduction-transfer}:
\[
 \frac{c_*A}{16C_w}\le M_{\rm bin}
       \le2\W_X(T)+\frac{c_*A}{32C_w}.
\]
Thus $\W_X(T)\ge c_*A/(64C_w)$ in this branch.

For $1\le P<P_{\rm large}$, choose two distinct labels $s,t$.
Centering and the identity
$\max(U,V)=(U+V+|U-V|)/2$ give
\[
 \W_X(T)\ge\frac12\E|\ip{t-s}{X}|
 \ge\frac{A}{2C_{\rm reg}P_{\rm large}}
\]
by \eqref{eq:scalar}. Taking the minimum of this constant,
$c_*/(64C_w)$, and the easy-branch constant $c_{h,e}$ proves
\eqref{eq:main-conclusion}.
\end{proof}

\subsection{Where each structural assumption enters}

The proof uses log-concavity in the moment witness and sparse
reduction, in convexity and shrinkage of joint-tail bodies, and in
the source potential and conditional densities of the transport
argument. It uses unconditionality to represent independent signs,
to remove common coordinates by Jensen, and to make each
conditional transport map odd. The prefix and core selections
themselves are deterministic.

Negative association is not an assumption anywhere in these steps.
The product bound \eqref{eq:label-mgf-domination} concerns an
artificial distribution on the fixed witnesses, created by the
common-core optimization; it is not a product bound for the
coordinates of $X$. This distinction is the reason that the
argument, if verified, reaches beyond negative association.
The manuscript makes no claim for log-concave vectors without
unconditionality.

\section{Two consequences}\label{sec:consequences}

\subsection{An entropy bound in the centroid metric}

The first consequence expresses the result as a covering estimate.
The expected supremum bounds how many well-separated labels can fit
into the moment metric. This is a useful form when packing bounds
are to be inserted into a chaining argument.

For $p\ge1$ define the $L_p$ centroid body by its support function
\[
 h_{Z_p(X)}(t)=\norm{\ip{t}{X}}_p.
\]
In the nondegenerate case its polar $Z_p(X)^\circ$ is the unit ball
of this norm. Let $\mathcal N(T,rB)$ denote the least number of
translates of $rB$, with centers in $T$, covering $T$.

\begin{corollary}\label{cor:covering}
There is a universal $C$ such that, for every finite $T$ and every
unconditional log-concave $X$,
\begin{equation}\label{eq:centroid-cover}
 \mathcal N\bigl(T,C\W_X(T)Z_p(X)^\circ\bigr)\le e^p,
 \qquad p\ge1.
\end{equation}
For a degenerate law the assertion is understood on the quotient
by the kernel of the moment seminorm.
\end{corollary}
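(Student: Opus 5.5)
The plan is the standard passage from a packing statement to a covering statement, via a maximal separated subset. Fix $p\ge1$ and put $r=C\W_X(T)$, with $C$ to be chosen as $C=2/c$, where $c$ is the constant of \Cref{thm:main}.

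\emph{Step 1: a maximal packing.} Choose a maximal subset $T'\subset T$ whose points are pairwise at $d_p$-distance greater than $r$. Here $d_p(s,t)=h_{Z_p(X)}(t-s)$ is the norm whose unit ball is $Z_p(X)^\circ$. By maximality, every point of $T$ lies within $d_p$-distance $r$ of some point of $T'$. Hence the translates $s+rZ_p(X)^\circ$, $s\in T'$, cover $T$, and all their centers lie in $T$. It therefore suffices to show $|T'|<e^p$, or $|T'|\le e^p$ after minor adjustments.

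\emph{Step 2: apply the main theorem.} Suppose instead that $|T'|\ge e^p$. Since $T'$ is finite and $r$-separated in $d_p$, for any $A<r$ (or $A=r$ exactly, since the separation is strict) \Cref{thm:main} with $P=p$ gives
\[
 \W_X(T')\ge cA=c\,C\,\W_X(T)=2\W_X(T).
\]
On the other hand, $T'\subset T$ gives $\W_X(T')\le\W_X(T)$, because the maximum over a subset is smaller pointwise. So we get $2\W_X(T)\le\W_X(T)$, which forces $\W_X(T)\le0$.

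\emph{Step 3: the degenerate case $\W_X(T)\le0$.} This is the only point needing care, and it is the main obstacle. By centering, $\W_X(T)\ge\max_t\E\ip{t}{X}=0$. Moreover $\W_X(T)=0$ forces $\max_t\ip{t}{X}=\ip{t_0}{X}$ almost surely for every fixed $t_0$. Taking two labels and using $\max(U,V)=(U+V+|U-V|)/2$ then gives $\E|\ip{t-s}{X}|=0$. By \eqref{eq:scalar} this yields $d_p(s,t)=0$ for all $s,t\in T$. In that case all of $T$ collapses to a single point of the quotient by the kernel of the moment seminorm, so one translate suffices and $1\le e^p$.

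In the nondegenerate case $\W_X(T)>0$, Step 2 yields a contradiction, so $|T'|<e^p$ and \eqref{eq:centroid-cover} follows. Throughout, the degenerate law is handled by passing to that quotient, exactly as the statement stipulates. The constant $C=2/c$ is universal, because $c$ is.
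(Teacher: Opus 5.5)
Your proposal is correct and matches the paper's proof. Both take a maximal $C\W_X(T)$-separated subset, use \Cref{thm:main} together with $\W_X(T')\le\W_X(T)$ to rule out $|T'|\ge e^p$ when $\W_X(T)>0$, and use the two-point identity to collapse $T$ to a single point of the quotient when $\W_X(T)=0$.
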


\begin{proof}
Take $C>1/c$, where $c$ is the constant in \Cref{thm:main}, and
let $w=\W_X(T)$. If $w>0$, choose a maximal subset $S\subset T$
whose distinct points have $L_p$ distance greater than $Cw$.
If $|S|\ge e^p$, \Cref{thm:main} gives
$w\ge\W_X(S)\ge cCw>w$, a contradiction.
Maximality supplies a cover of $T$ by $|S|<e^p$ balls of radius
$Cw$. If $w=0$, the two-point identity in the proof of the main
theorem shows that every difference of labels has zero moment
seminorm, so one ball suffices on the quotient.
\end{proof}

\subsection{Concentration of bounded magnitude statistics}

The second consequence is independent of the witness construction.
It applies to weighted sums of bounded coordinate responses, even
when the magnitudes are dependent.

\begin{corollary}\label{cor:statistics}
For any unconditional log-concave $X$, any $a_i>0$, and any
$v\in\R^d$, put
$F(X)=\sum_i v_i(1-e^{-a_i|X_i|})$. Then
\begin{equation}\label{eq:statistic-mgf}
 \log\E e^{t(F-\E F)}\le C_Tt^2\norm{v}_2^2
 \quad(t\in\R).
\end{equation}
In particular, for $r>0$ and $v\ne0$,
\begin{equation}\label{eq:statistic-tail}
 \Prob(|F-\E F|\ge r)
 \le2\exp\!\left(-\frac{r^2}{4C_T\norm{v}_2^2}\right).
\end{equation}
\end{corollary}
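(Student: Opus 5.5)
The plan is to apply \Cref{thm:saturated-mgf} directly to the linear function $g(z)=\sum_i v_iz_i$ on $[0,1]^d$, and then to convert the resulting moment bound into a tail bound by exponential Markov.

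First, the corollary's statistic is exactly $F(X)=g(\Psi(|X|))$, where $\Psi$ is the saturation map with the given parameters $a_i>0$. The function $g$ is linear, so Cauchy--Schwarz gives
\[
 |g(z)-g(z')|=|\ip{v}{z-z'}|\le\norm{v}_2\norm{z-z'}_2 .
\]
Hence $g$ is $L$-Lipschitz with $L=\norm{v}_2$. \Cref{thm:saturated-mgf} requires nothing else: $X$ may be an arbitrary, possibly singular, unconditional log-concave vector, and the constant $C_T$ does not depend on the $a_i$. It therefore yields \eqref{eq:statistic-mgf} for every $t\in\R$. Note that $F$ is bounded, so all exponential moments are finite and no integrability issue arises.

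Second, for the tail bound take $r>0$ and $v\ne0$. By Markov's inequality applied to $e^{t(F-\E F)}$ with $t>0$,
\[
 \Prob(F-\E F\ge r)\le\exp\{C_Tt^2\norm{v}_2^2-tr\}.
\]
The exponent is minimized at $t=r/(2C_T\norm{v}_2^2)$, which gives $\exp\{-r^2/(4C_T\norm{v}_2^2)\}$. The lower tail $\Prob(F-\E F\le -r)$ is handled the same way by taking negative $t$, or equivalently by replacing $v$ with $-v$. Adding the two tails produces the factor $2$ in \eqref{eq:statistic-tail}.

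There is essentially no obstacle here: all the analytic work was done in \Cref{thm:saturated-mgf}. The only point to check is that its hypotheses are exactly met, namely that the domain $[0,1]^d$ contains the range of $\Psi$ and that Lipschitz continuity is measured in the Euclidean norm. Both hold.
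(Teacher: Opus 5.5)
Your proposal is correct and matches the paper's proof: the paper likewise notes that $g(z)=\ip{v}{z}$ is $\norm{v}_2$-Lipschitz, applies \Cref{thm:saturated-mgf} to obtain the moment bound, and optimizes exponential Markov in $t$ for each of the two tails. Your explicit choice $t=r/(2C_T\norm{v}_2^2)$, together with summing the two one-sided bounds, gives exactly the stated constant.
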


\begin{proof}
The function $g(z)=\ip{v}{z}$ is $\norm{v}_2$-Lipschitz.
Apply \Cref{thm:saturated-mgf} and optimize exponential Markov
in $t$ for each of the two tails.
\end{proof}

This consequence follows from the transport part alone. Its
normalization does not require independent coordinates, isotropy,
or bounds on the parameters $a_i$.

\appendix
\section{A compact dependency and parameter record}

The moment order and the three entropy parameters in the final
argument should not be identified:
\[
 P\quad\longrightarrow\quad
 q=\log\lceil e^{P/4}\rceil\quad\longrightarrow\quad
 p_1\ge q/2\quad\longrightarrow\quad p\ge3p_1/4.
\]
Here $P$ is the moment order in the original packing; $q$ is the
logarithm of the number of rounded labels; $p_1$ follows the
directed graph selection; and $p$ follows the one-time core
selection. Tail shrinkage changes the initial witness probability
from $e^{-P}$ to $e^{-q}$. Subsequent selections keep this actual
joint-tail probability, which is at least $e^{-3p}$.

The proof dependencies, in order, are:
\begin{enumerate}[leftmargin=*,itemsep=4pt]
\item The sparse reduction and sparse moment witness imply
\eqref{eq:H} with $u=A/(16C_w)$.
\item Prefix counting and finite-dimensional convex separation
give \eqref{eq:common-count}; one graph selection gives
\eqref{eq:directed}.
\item Maximizing $\Phi$ once gives \eqref{eq:S}; projection removes
the core without conditioning the source.
\item Clipping plus above-tangent transport gives
\eqref{eq:saturated-mgf}; entropy softening and the transformed
gradient estimate give \eqref{eq:free-mgf}.
\item Feasibility and the all-temperature bound give
\eqref{eq:positive-free-energy}; the overlap graph and Bernoulli
minoration give \eqref{eq:spread-thm-conclusion}.
\item The classical approximation error is absorbed using
\eqref{eq:rounding-final}.
\end{enumerate}

In particular, the former gap between an annealed partition function
and its mean logarithm is addressed by the explicit term
\[
 C_F\frac{B^2m}{p^2\eta'}e^{-p/(16m)}
\]
in \eqref{eq:free-mean}. The term comes from transport concentration
at $t=1/\eta'$, not from tail feasibility alone. Its validity rests
on the clipping and transport lemmas, whose proofs include the
regularization needed to keep the conditional maps continuous at
zero.

\newpage
\begingroup
\raggedright

\endgroup
\end{document}